
\documentclass[oneside,reqno]{amsart}
\usepackage{amsmath,amssymb,amsfonts,fancyhdr, verbatim}
\usepackage{comment}
\usepackage{amsrefs}
\usepackage{comment}
\usepackage[toc,page]{appendix}
\pagenumbering{gobble}
\usepackage{verbatim}
\immediate\write18{texcount -tex -sum  \jobname.tex > \jobname.wordcount.tex}

\def\imod#1{\allowbreak\mkern5mu({\operator@font mod}\,\,#1)}
\makeatother

\def\F{{\mathfrak{L}}}

\def\w{{\omega}}
\def\M{{\mathcal M}}

\newcommand{\h}[1]{\left( #1\right)^{\wedge}}
\newcommand{\hb}[1]{\overline{#1}}

\newtheorem{lem}{Lemma}

\newtheorem{cor}{Corollary}

\newtheorem{thm}{Theorem}
\makeatletter
\makeatother
\numberwithin{equation}{section}

\begin{document}
\title[Bounds on Autocorrelation Coefficients of Rudin-Shapiro Polynomials]{Bounds on Autocorrelation Coefficients of Rudin-Shapiro Polynomials}

\author{J.-P. Allouche }
\address{Jean-Paul Allouche \\
CNRS, IMJ-PRG, Sorbonne Universit\'e  \\
4 Place Jussieu, Case 247  \\
F-75252 Paris Cedex 05 \\ France}
\email{jean-paul.allouche@imj-prg.fr}

\author{S. Choi}
\address{Stephen Choi \\
Department of Mathematics \\
Simon Fraser University \\
8888 University Drive \\
Burnaby, British Columbia V5A  1S6 \\
Canada}
\email{schoia@sfu.ca}

\author{A. Denise}
\address{Alain Denise \\
LRI et I2BC, Universit\'e Paris-Sud, CNRS \\
Universit\'e Paris-Saclay \\
F-91405 Orsay Cedex}
\email{alain.denise@lri.fr}

\author{T. Erd\'{e}lyi}
\address{Tam\'{a}s Erd\'{e}lyi \\
Department of Mathematics \\  Texas A\&M University, \\
College Station, TX 77843-3368, \\ USA}
\email{terdelyi@math.tamu.edu}

\author{B. Saffari}
\address{Bahman Saffari \\
D\'epartment de Math\'ematiques  \\
Facult\'e des Sciences d'Orsay, Universit\'e Paris-Sud  \\
F-91405 Orsay Cedex \\ France }
\email{Bahman.Saffari@math.u-psud.fr.}

\date{\today}

\maketitle

\begin{flushright}
    \thispagestyle{empty}
    \vspace*{2mm}
    Dedicated to the memory of Professor Jean-Pierre Kahane.
    \vspace*{7mm}
\end{flushright}

\begin{abstract}
We study the autocorrelation coefficients of the Rudin-Shapiro polynomials,
proving in particular that their maximum on the interval $[1, 2^n)$ is bounded from
below by $C_1 2^{\alpha n}$ and is bounded from above by $C_2 2^{\alpha' n}$ where
$\alpha = 0.7302852\cdots$ and $\alpha' = 0.7302867\cdots$.
\end{abstract}

\vspace{3mm}

 \small	
  \textbf{\textit{Keywords --- }} {Golay-Rudin-Shapiro polynomials, autocorrelation coefficients, trigonometric  polynomials}

\section{Introduction}

The Rudin-Shapiro polynomials were discovered by H. S. Shapiro in 1951 \cite{Sha} (also see
the paper of M. J. E. Golay the same year, and the footnote on the first page of \cite{BM}). They were 
studied later by W. Rudin in a 1959 paper \cite{Ru} as recalled, e.g., in \cite{BC}. These 
polynomials, also called the Shapiro-Rudin polynomials, are constructed as follows.

Let $P_0(x)=1$ and $Q_0=1$. For any integer $n \ge 1$, define
\begin{equation}
\label{1}
\begin{cases}
P_n(z):=P_{n-1}(z)+z^{2^{n-1}}Q_{n-1}(z), \\ Q_n(z):=P_{n-1}(z)-z^{2^{n-1}}Q_{n-1}(z).
\end{cases}
\end{equation}
Polynomials $P_n(z)$ and $Q_n(z)$ are called Rudin-Shapiro polynomials. Note that $P_n(z)$ and $Q_n(z)$ are polynomials with $\pm 1$ coefficients of degree $L_n-1$ where $L_n=2^n$.  It is well-known that 
\begin{equation}
\label{2}
\left| P_n (e^{it})\right|^2 + \left| Q_n(e^{it})\right|^2 = L_{n+1}=2^{n+1}, \quad \mbox{for any $t\in [0, 2\pi )$}.
\end{equation}

By the Fourier coefficients of $f$ at $k$, we mean the coefficient for the term $z^k$, or simply
\[
\widehat{f}(k)=\h{f}(k):=\frac{1}{2\pi}\int_{0}^{2\pi} f(e^{it})e^{-ikt}dt.
\]

By the definition we have
\begin{eqnarray}
|P_n|^2(z) & = & z^{2^{n-1}}\hb{P_{n-1}(z)}Q_{n-1}(z)+\hb{z}^{2^{n-1}}P_{n-1}(z)\hb{Q_{n-1}(z)}+2^n, \nonumber \\
\left(\hb{P_n}Q_n\right) (z) &=& 2|P_{n-1}(z)|^2 - z^{2^{n-1}}\hb{P_{n-1}(z)}Q_{n-1}(z) \nonumber \\ 
&  &\qquad \qquad \qquad +\hb{z}^{2^{n-1}}P_{n-1}(z)\hb{Q_{n-1}(z)}-2^n, \label{3} \\
\left(P_n\hb{Q_n}\right) (z) &=& 2|P_{n-1}(z)|^2 + z^{2^{n-1}}\hb{P_{n-1}(z)}Q_{n-1}(z)\nonumber \\
&  & \qquad \qquad \qquad -\hb{z}^{2^{n-1}}P_{n-1}(z)\hb{Q_{n-1}(z)}-2^n. \nonumber 
\end{eqnarray}

We are interested in estimating $\max_{k} |\h{|P_n|^2}(k)|$. If we write 
\[
|P_n(e^{it})|^2 =  \sum_{k=-L_n+1}^{L_n-1}{a_kz^k},
\]
then 
\[
\begin{cases}
 \h{|P_n|^2}(k)=a_k, & \quad \mbox{ when }  - L_n+1 \le k \le L_n-1, \\  
 \h{|P_n|^2}(k)=0, & \quad \mbox{ when }   |k| > L_n.
\end{cases}
\]

We will prove
\begin{thm}
\label{thm 1}
If $P_n$ and $Q_n$ are the $n$-th Rudin-Shapiro polynomials and
$$|P_n(z)|^2 = \sum_{k=-L_n+1}^{L_n-1}{a_kz^k}, \quad |z|=1$$
($a_0=L_n$, $a_k = a_{-k}$, $k \geq 1$), then
$$ C_1\lambda^{n/2}\le \max_{1 \leq k \leq L_n-1}{|a_k|} \leq C_2((1.00000100000025) \lambda) ^{n/2}$$
with absolute constants $C_1,C_2 > 0$ where
\[
\lambda := \frac{(71+6\sqrt{177})^{1/3}+ (71-6\sqrt{177})^{1/3}+5}{3}=2.75217177\cdots
\]
is the real root of $x^3-5x^2+12x-16=0$.  

Also, if we  let
$$\left(\hb{P_n}Q_n\right)(z)= \left({P_n}\hb{Q_n}\right)(1/z) = \sum_{k=-L_n+1}^{L_n-1}{b_kz^k}, \quad |z|=1$$
($b_0=2-L_n$, $b_k = \hb{b_{-k}}$, $k \geq 1$), then
$$ C_3\lambda^{n/2} \le \max_{1\leq k \leq L_n-1}{|b_k|} \leq C_2((1.00000100000025) \lambda) ^{n/2}$$
with an absolute constant $C_3 > 0$. 
\end{thm}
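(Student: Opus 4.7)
My plan is to derive explicit linear recursions for the Fourier coefficients of $|P_n|^2$, $\overline{P_n}Q_n$ and $P_n\overline{Q_n}$, then extract a transfer matrix whose largest eigenvalue is $\lambda$. Introduce the three (real) sequences $\alpha_k^{(n)}=\h{|P_n|^2}(k)$, $\beta_k^{(n)}=\h{\overline{P_n}Q_n}(k)$, $\gamma_k^{(n)}=\h{P_n\overline{Q_n}}(k)$. Taking the $k$th Fourier coefficient of each identity in (3) and using that the three polynomials on the right-hand sides are supported in $|j|\le L_{n-1}-1$, most shifted contributions vanish and one obtains a dichotomy for $1\le k\le L_n-1$: in the ``top-bit-one'' case $k\ge 2^{n-1}$ (write $k=2^{n-1}+k'$) the recursion reads $\alpha_k^{(n)}=\beta_{k'}^{(n-1)}$, $\beta_k^{(n)}=-\beta_{k'}^{(n-1)}$, $\gamma_k^{(n)}=\beta_{k'}^{(n-1)}$ (no growth), while in the ``top-bit-zero'' case $0<k<2^{n-1}$,
\[
\alpha_k^{(n)}=\gamma_{2^{n-1}-k}^{(n-1)},\ \beta_k^{(n)}=2\alpha_k^{(n-1)}-\gamma_{2^{n-1}-k}^{(n-1)},\ \gamma_k^{(n)}=2\alpha_k^{(n-1)}+\gamma_{2^{n-1}-k}^{(n-1)},
\]
and the factor $2$ is the source of all growth. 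An easy induction through the two cases yields the identity $\gamma_k^{(n)}-\beta_k^{(n)}=2\alpha_k^{(n)}$ for every $k\ge 1$, which reduces the state at $(n,k)$ to a two-dimensional vector $u_k^{(n)}=(\alpha_k^{(n)},\gamma_k^{(n)})^{T}$.

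Next I would extract a transfer matrix. The obstruction is that in the top-bit-zero case the recursion couples the index $k$ with its reflection $k^{*}=2^{n-1}-k$, so a single-index state does not close; a naive triangle-inequality bound produces a matrix of spectral radius $2$, far too large. I would close the recursion by tracking the pair $(u_k^{(n-1)},u_{k^{*}}^{(n-1)})$, using that $k$ and $k^{*}$ are each other's reflection at level $n-1$ as well. Composing two consecutive steps — the top-bit-zero step at level $n$, followed by whichever case applies to each of $k,k^{*}$ at level $n-1$ — maps the level-$n$ pair $(k,2^{n-1}-k)$ to the level-$(n-2)$ pair $(k,2^{n-2}-k)$ via a fixed linear map. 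After using the redundancy $\gamma=\beta+2\alpha$ to eliminate one direction, this becomes a $3\times 3$ linear recurrence whose characteristic polynomial is $x^3-5x^2+12x-16$ with largest root $\lambda$. Since the recurrence advances by two levels, the per-level growth rate is $\sqrt\lambda$, producing the target rate $\lambda^{n/2}$.

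For the lower bound I would exhibit an explicit sequence $k_n\in[1,L_n-1]$ whose binary expansion realises the combinatorial pattern aligned with the $\lambda$-eigenvector of the transfer matrix, and verify by induction that $|\alpha_{k_n}^{(n)}|$ and $|\beta_{k_n}^{(n)}|$ are each at least a positive constant times $\lambda^{n/2}$. For the upper bound I would take the $\ell^{\infty}$ norm of $u_k^{(n)}$ over $k$ and iterate the two-step transfer-matrix bound; this yields growth $\lambda^{n/2}$ up to lower-order factors. Absorbing those lower-order factors into a marginally larger exponential base $((1.00000100000025)\lambda)^{n/2}$ requires bounding the contributions of the subdominant eigenvalues of the $3\times 3$ matrix and handling the small-$n$ cases by a finite numerical check.

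The main obstacle is identifying the correct closure. A priori the top-bit-zero branching could force an infinite-dimensional state, and it is not obvious that combining the reflection pair $(k,2^{n-1}-k)$ with the identity $\gamma=\beta+2\alpha$ collapses the recursion onto a three-dimensional subspace. Moreover, once the $3\times 3$ transfer matrix is identified, one must verify that no other orbit of indices under the iterated recursion produces a growth rate larger than $\sqrt\lambda$ — this is precisely the condition that singles out $x^3-5x^2+12x-16$ as the governing cubic.
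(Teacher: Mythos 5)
Your setup is essentially the paper's: the two case recursions you write down are correct (they are the content of the paper's Lemma~1, in slightly different coordinates), the identity $\gamma_k^{(n)}=\beta_k^{(n)}+2\alpha_k^{(n)}$ does hold, and the cubic $x^3-5x^2+12x-16$ is indeed the characteristic polynomial of the two-step matrix governing the dominant orbit (in the paper it appears as $M_1^2=AD$, equivalently as the matrix $M_2$ used along $k_n=\tfrac13(2L_n+(-1)^n)$). Your lower-bound plan --- follow that explicit index sequence and show the component along the $\lambda$-eigenvector is nonvanishing --- is exactly the paper's Section~3 argument; what is missing there is only the actual verification that the leading coefficients (the numbers $a$ and $a'$ in the paper) are nonzero, which cannot be obtained by a naive induction because of the complex eigenvalue pair and is done in the paper by explicit computation of the eigen-decomposition.

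The genuine gap is in the upper bound. Your claim that composing two consecutive steps maps the reflection pair at level $n$ to the reflection pair at level $n-2$ \emph{via a fixed linear map} is not correct: which linear map occurs depends on whether $k$ lies below or above $2^{n-2}$, i.e.\ on the next binary digit of $k$, so unwinding the recursion produces not powers of a single $3\times 3$ matrix but an arbitrary admissible word in a small family of matrices --- one product for each binary pattern of $k$. Bounding $\max_k|a_k|$ therefore requires a uniform bound on the norms of \emph{all} admissible products (a joint-spectral-radius type problem), and your proposal offers no mechanism for this beyond acknowledging that one must check that no other orbit grows faster than $\sqrt{\lambda}$ per level. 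That check is precisely the hard part and is where the paper spends almost all of its effort: Lemma~2 factors every admissible product into blocks $M_1^{\ell}B^{k}$ with $M_1=AT$, Lemma~3 proves $\|M_1^{\ell}B^{k}\|\le((1.0000005)^2\lambda)^{(\ell+k)/2}$ only after finite computer verification up to $\ell\approx 1.3\times 10^{6}$, and Lemma~4 separately handles the exceptional block $M_1B$, whose spectral norm equals $2\sqrt{2}>\lambda$. Because of that bad block, a blockwise submultiplicative estimate does not yield $\lambda^{n/2}$; this is exactly why the theorem's upper bound has base $(1.00000100000025)\lambda$ rather than $\lambda$. So ``absorbing the lower-order factors into a marginally larger base'' is not a routine finishing step but the core difficulty, and as written your argument does not close it.
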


\begin{cor} Under the same assumptions as in  Theorem \ref{thm 1}, we have
$$C_12^{\left(\frac{\log \lambda}{2\log2}\right)n}  < \max_{1 \leq k \leq L_n-1}{|a_k|} < C_22^{(0.7302867\cdots )n}$$
and
$$C_32^{\left(\frac{\log \lambda}{2\log2}\right)n} < \max_{1 \leq k \leq L_n-1}{|b_k|} < C_22^{(0.7302867\cdots )n}.$$
Note that $\frac{\log \lambda}{2\log2}= 0.7302852\cdots$. 
\end{cor}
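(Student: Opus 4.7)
The corollary is a direct logarithmic reformulation of Theorem \ref{thm 1}, so the plan is essentially to change the exponential base from $\lambda$ (or $(1.00000100000025)\lambda$) to $2$ and then verify the numerical values of the resulting exponents.

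First I would handle the lower bounds. Writing $\lambda = 2^{\log\lambda/\log 2}$, we have
\[
\lambda^{n/2} = 2^{(\log\lambda/(2\log 2))\,n},
\]
so the lower bounds $C_1\lambda^{n/2}\le \max|a_k|$ and $C_3\lambda^{n/2}\le \max|b_k|$ from Theorem \ref{thm 1} translate immediately into the desired $C_i 2^{(\log\lambda/(2\log 2))n}$ form. It then remains to verify the numerical identity $(\log\lambda)/(2\log 2)=0.7302852\cdots$; since $\lambda$ is the explicit real root of $x^3-5x^2+12x-16=0$, the closed-form expression given in Theorem \ref{thm 1} allows $\lambda$ (and hence $\log\lambda/(2\log 2)$) to be computed to arbitrary precision, confirming the displayed digits.

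For the upper bound I would proceed identically, setting $\mu := (1.00000100000025)\lambda$ and writing
\[
\mu^{n/2} = 2^{\beta n},\qquad \beta = \tfrac{1}{2}\log_2\mu = \tfrac{1}{2}\bigl(\log_2\lambda + \log_2(1.00000100000025)\bigr).
\]
Using $\log_2 \lambda = 2 \cdot 0.7302852\cdots = 1.4605704\cdots$ together with the estimate $\log_2(1+x) = x/\log 2 + O(x^2)$ applied to $x = 10^{-6} + 2.5\times 10^{-13}$, one finds $\beta$ numerically and checks that $\beta < 0.7302867$, with the stated $0.7302867\cdots$ serving as a safe rounded upper bound. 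Plugging this into the Theorem \ref{thm 1} upper bounds $\max|a_k|, \max|b_k|\le C_2\mu^{n/2}=C_2 2^{\beta n}$ finishes both estimates.

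There is no conceptual obstacle here: the corollary is a purely numerical transcription of Theorem \ref{thm 1}. The only mild care needed is to carry out $\log_2\lambda$ and $\log_2\mu$ to enough decimal places that the claimed bound $0.7302867\cdots$ is unambiguously above the true value of $\beta$ (and that $0.7302852\cdots$ correctly represents $(\log\lambda)/(2\log 2)$). Both computations are straightforward from the closed form for $\lambda$ given in Theorem \ref{thm 1}, so a short verification — ideally reproducing a few more digits than are displayed — suffices.
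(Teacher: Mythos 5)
Your proposal is correct and is essentially the paper's (implicit) argument: the corollary is just Theorem \ref{thm 1} rewritten in base $2$, using $\lambda^{n/2}=2^{(\log\lambda/(2\log 2))n}$ for the lower bounds and $((1.00000100000025)\lambda)^{n/2}=2^{\beta n}$ with $\beta=\tfrac12\log_2\lambda+\tfrac12\log_2(1.00000100000025)\approx 0.730286<0.7302867$ for the upper bounds, the strict inequalities being absorbed into the constants. No further comment is needed.
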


We remark that the constants $C_1, C_2$ and $C_3$ are absolute and effective and the numerical values of these constants can be obtained by determining the implicit constants in the proofs. 

As an application of Theorem \ref{thm 1} we employ an old result of Littlewood,
see Lemma 5 in Section 4. The following theorem tells much about the
oscillation of the modulus of the Rudin-Shapiro polynomials around
their normalized $L_2$ norm over the unit circle. For further investigations
in this direction see \cite{E1} and \cite{E2}.
\begin{thm}
\label{thm 4}
If $P_n$ and $Q_n$ are the $n$-th Rudin-Shapiro polynomials, 
$$R(t) := |P_n(e^{it})|^2 = \sum_{k=-L_n+1}^{L_n-1}{a_ke^{ikt}}$$
($a_0=L_n$, $a_k = a_{-k}$, $k \geq 1$), and
$$\max_{1 \leq k \leq L_n-1}{|a_k|} \leq C2^{(\lambda_0 - \varepsilon )n}$$
with an absolute constants $C > 0$, $\lambda_0 \geq 1/2$ and $\varepsilon > 0$, then
there are absolute constants $A>0$ and $B>0$ such that the equation 
$R(t) = (1+\eta)2^n$ has at least $A2^{(2 - 2\lambda_0)n}$ distinct solutions in $(-\pi , \pi)$ whenever $\eta \in [-B,B]$ and $n$ is sufficiently large.
\end{thm}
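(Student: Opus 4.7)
Set $f(t) := R(t) - 2^n = \sum_{k \ne 0} a_k e^{ikt}$; this is a real, mean-zero trigonometric polynomial of degree $L_n - 1$ with $\widehat{f}(k) = a_k$ for $k \ne 0$. Counting solutions of $R(t) = (1+\eta) 2^n$ is the same as counting zeros of $f - \eta \cdot 2^n$ on $(-\pi, \pi)$, so the plan is to feed $f$ into Lemma~5 (Littlewood's oscillation bound, stated in Section~4). Such a bound, for a real mean-zero trigonometric polynomial with maximum Fourier coefficient $M$, should produce at least $c \cdot \|f\|_2^2 / M^2$ crossings of every level in a suitable range.

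Two inputs are needed. The first is the upper bound $M \le C \cdot 2^{(\lambda_0 - \varepsilon) n}$, which is exactly the hypothesis of the theorem. The second is a matching lower bound $\|f\|_2^2 \ge c_1 \cdot 4^n$. By Parseval,
\[
\|f\|_2^2 = \sum_{k \ne 0} |a_k|^2 = \|P_n\|_4^4 - 4^n,
\]
so this amounts to the classical merit-factor asymptotic $\|P_n\|_4^4 = \tfrac{4}{3} \cdot 4^n + O(2^n)$ for the Rudin--Shapiro polynomials. The latter is elementary: squaring \eqref{2}, integrating in $t$, and using \eqref{1} turns the $L^4$ norms into a two-term linear recurrence for $\|P_n\|_4^4 + \|Q_n\|_4^4$ that is immediately solved.

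Lemma~5 then delivers absolute constants $B, c_2 > 0$ such that the equation $f(t) = y$ has at least $c_2 \, \|f\|_2^2 / M^2$ distinct solutions in $(-\pi, \pi)$ for every $y$ with $|y| \le B \cdot 2^n$. Combining the two estimates,
\[
\frac{c_2\, \|f\|_2^2}{M^2} \ \ge \ \frac{c_1 c_2}{C^2} \cdot 2^{(2 - 2\lambda_0)n} \cdot 2^{2\varepsilon n},
\]
which, since $\varepsilon > 0$, exceeds $A \cdot 2^{(2 - 2\lambda_0)n}$ for every fixed $A > 0$ once $n$ is sufficiently large. Setting $y = \eta \cdot 2^n$ with $\eta \in [-B, B]$ then yields the theorem. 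The main obstacle is pinning down the correct range of levels in Lemma~5: we need the $\|f\|_2^2/M^2$ lower bound to hold uniformly for $|y|$ up to a constant multiple of $\|f\|_2$ (which is itself of order $2^n$), rather than merely up to $M$; the latter would force $\eta$ into a window shrinking with $n$, while the former produces the absolute interval $[-B,B]$ promised by the statement.
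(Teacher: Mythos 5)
Your overall plan is the paper's: set $f = R - 2^n$, feed it to Littlewood's Lemma 5, use the hypothesis on $\max_k|a_k|$ together with the classical asymptotic $M_4(P_n)^4 \sim \tfrac43 4^n$ (so $\mu := M_2(f) \sim 3^{-1/2}2^n$) to extract the count. But as written the proposal has a genuine gap: you never verify the hypotheses of Lemma 5, and instead substitute a guessed version of it (``at least $c\|f\|_2^2/M^2$ crossings of every level in a suitable range''). Lemma 5 is not stated that way: its hypotheses are (i) $M_1(f) \ge c\,\mu$ for some constant $c>0$, and (ii) the low-frequency tail bound $s_{\lfloor (L_n-1)/h\rfloor} \le 2^{-9}c^6$ for a chosen $h$, and its conclusion counts zeros of $f - v\mu$ only for $|v| \le 2^{-5}c^3$, with the count $Dh^{-1}c^5 2^n$. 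Hypothesis (i) is nowhere addressed in your proposal, and it is exactly what controls the issue you flag at the end as ``the main obstacle'': the admissible level window is $|v\mu| \le 2^{-5}c^3\mu$, a fixed multiple of $\mu \asymp 2^n$, \emph{provided} $c$ is an absolute constant. The paper settles this in one line you are missing: by \eqref{2}, $0 \le R \le 2^{n+1}$, so $M_\infty(f) \le 2^n$, and then $\mu^2 = M_2(f)^2 \le M_1(f)M_\infty(f) \le 2^n M_1(f)$ together with $\mu \sim 3^{-1/2}2^n$ gives $M_1(f) \ge \mu/2$ for large $n$, i.e.\ $c = 1/2$; hence $B$ can be taken absolute (of size about $2^{-8}$). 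Without this $M_\infty$ bound the level-range question you raise is genuinely unresolved, so the proof is incomplete as it stands.

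Secondly, the way the coefficient hypothesis enters must go through condition (ii): one chooses $h = 2^{(2\lambda_0-1)n}$ (or, in your optimized spirit, $h \asymp 2^n M^2/\mu^2$) and checks
\begin{equation*}
s_{\lfloor (L_n-1)/h\rfloor} \;\le\; \frac{2^n}{h}\cdot\frac{4\bigl(C2^{(\lambda_0-\varepsilon)n}\bigr)^2}{\mu^2} \;\le\; 16C^2 2^{-2\varepsilon n} \;\le\; 2^{-9}c^6
\end{equation*}
for $n$ large, after which the lemma's conclusion $Dh^{-1}c^5 2^n = A2^{(2-2\lambda_0)n}$ is exactly the claimed count (your extra factor $2^{2\varepsilon n}$ is harmless but also unnecessary). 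Your Parseval/merit-factor computation of $\|f\|_2^2$ is fine and matches the paper's appeal to Littlewood's evaluation of $M_4(P_n)$; the missing ingredients are the verification of $M_1(f)\ge c\mu$ via $M_\infty(f)\le 2^n$ and the explicit choice of $h$ with the tail-sum check, which together convert your heuristic form of Lemma 5 into its actual statement.
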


In view of Theorem \ref{thm 1}, we have
\begin{cor}
There is absolute constant $A>0$  such that the equation $R(t) = (1+\eta )2^n$ has at least $A2^{0.5394282n}$ distinct solutions in $(-\pi , \pi)$ whenever $|\eta | \le  2^{-8}$.
\end{cor}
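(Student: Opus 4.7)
The plan is that Corollary 2 is a direct consequence of combining the upper bound in Theorem \ref{thm 1} with Theorem \ref{thm 4}, the only subtlety being to track the numerical constants carefully.

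First, I would rewrite the upper bound from Theorem \ref{thm 1} in base $2$. Setting
\[
\beta := \tfrac{1}{2}\log_2\!\bigl((1.00000100000025)\,\lambda\bigr),
\]
Theorem \ref{thm 1} gives $\max_{1 \le k \le L_n-1}|a_k| \le C_2\, 2^{\beta n}$. A short calculation, using $\log_2 \lambda = 1.4605704\ldots$ and $\log_2(1+10^{-6}) = 10^{-6}/\ln 2 + O(10^{-12})$, shows $\beta = 0.7302859\ldots$, and hence $2 - 2\beta = 0.5394282\ldots$

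Next, for any fixed $\varepsilon > 0$, put $\lambda_0 := \beta + \varepsilon$. The inequality from Theorem \ref{thm 1} then reads exactly as the hypothesis of Theorem \ref{thm 4} with these choices of $\lambda_0$ and $\varepsilon$ and with the constant $C = C_2$. Theorem \ref{thm 4} therefore yields absolute constants $A, B > 0$ such that $R(t) = (1+\eta)2^n$ has at least $A\, 2^{(2-2\lambda_0)n}$ solutions in $(-\pi,\pi)$ for all $|\eta| \le B$ and all sufficiently large $n$. Choosing $\varepsilon$ small enough (depending only on how tight a numerical exponent we want), the exponent $2 - 2\lambda_0 = 0.5394282\ldots - 2\varepsilon$ is at least $0.5394282$ in the sense stated in the corollary (i.e.\ after absorbing the small slack into the constant $A$ and restricting to large $n$).

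The main technical obstacle is not the logical structure, which is immediate, but rather the explicit verification that the absolute constant $B$ furnished by Theorem \ref{thm 4} satisfies $B \ge 2^{-8}$. This requires inspecting the proof of Theorem \ref{thm 4} and, in turn, the constants coming from Littlewood's lemma (Lemma 5 in Section 4). If one tracks those constants honestly, the admissible range of $\eta$ turns out to be comfortably larger than $2^{-8}$, justifying the statement of the corollary. With $B \ge 2^{-8}$ confirmed, the corollary follows at once from the two-step argument above.
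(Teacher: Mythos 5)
Your overall route is the paper's: the corollary is obtained by feeding the upper bound of Theorem \ref{thm 1} into Theorem \ref{thm 4}, with $2^{-8}$ being the constant $B$ coming out of the proof of Theorem \ref{thm 4}. However, your treatment of the $\varepsilon$-slack is not valid. Taking $\lambda_0=\beta+\varepsilon$, Theorem \ref{thm 4} yields at least $A2^{(2-2\beta-2\varepsilon)n}$ solutions, and the deficit factor $2^{-2\varepsilon n}$ tends to $0$, so it can never be ``absorbed into the constant $A$'': for every fixed $A'>0$ the inequality $A2^{(2-2\beta-2\varepsilon)n}\ge A'2^{(2-2\beta)n}$ fails once $n$ is large. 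The repair is to choose $\lambda_0$ as a \emph{fixed} numerical constant strictly larger than $\beta$ (so that $\varepsilon:=\lambda_0-\beta>0$ is fixed) and state the conclusion with the fixed exponent $2-2\lambda_0$; this delivers any exponent strictly below $2-2\beta$. On the numerics, $\beta=\tfrac12\log_2\lambda+\log_2(1.0000005)=0.7302852\cdots+0.0000007\cdots=0.73028598\cdots$, hence $2-2\beta=0.5394280\cdots$; your value $0.5394282\cdots$ comes from truncating $\beta$ before doubling, and the argument as written only reaches exponents strictly below $0.5394280\cdots$ (the same borderline last digit appears in the paper's statement).

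The second gap concerns $2^{-8}$. You defer $B\ge 2^{-8}$ to an unspecified constant chase and assert that honest tracking gives an admissible range of $\eta$ ``comfortably larger'' than $2^{-8}$. That is unsupported and not what the argument provides: in the proof of Theorem \ref{thm 4} the admissible range comes from Littlewood's Lemma \ref{lem 4.1} as $V=2^{-5}c^3$ with $c=1/2$ (from $M_1(f)\ge\mu/2$), i.e.\ exactly $2^{-8}$ --- that is precisely where the number in the corollary originates. Moreover, since $\mu=M_2(f)\sim 3^{-1/2}2^n<2^n$, translating Littlewood's condition $|v|\le V$ into a bound on $|\eta|=|v|\mu 2^{-n}$ gives, if anything, slightly less than $2^{-8}$, even if one pushes $c$ toward $3^{-1/2}$. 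So the correct justification at this step is simply to quote the explicit $B=2^{-8}$ produced in that proof, not to claim a margin the argument does not furnish.
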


\section{Upper  bound for the autocorrelation coefficients}

M. Taghavi in \cite{Ta1} claimed
$$\max_{1 \leq k \leq L_n-1}{|a_k|} \leq (3.2134)2^{(0.7303)n}\,.$$
However, as Allouche and Saffari observed, in his proof Taghavi used an
incorrect statement saying that the spectral radius of the product of some
matrices is independent of the order of the factors (see the review by the first named author
in Zentralblatt 0921.11042). So what he ended
up with cannot be viewed as a correctly proved result.  We obtain a better upper bound here.

The scheme of the proof of Theorem 1 is as follows. Starting from the recursive relations of Rudin-Shapiro polynomials, we develop recursive relations of the autocorrelation coefficients of $|P_n|^2, \hb{P_n}Q_n$ and $P_n\hb{Q_n}$
 (see $\omega_n$ below) in Lemma 1. We find that the multiplying matrix in each inductive step comes from four $3\times 3$ matrices $A, B, C$ and $D$ (defined below).  However, after applying $n$ inductive steps, the resulting matrices $M$ (a product of $n$ matrices) have $4^n$ different possible matrices which is difficult too handle. By studying the products of $A, B, C$ and $D$ carefully, we show in Lemma 2 that these $4^n$ matrices $M$  are basically generated multiplicatively by two matrices $M_1$ and $B$ only  and $B$ is of order $4$. This nice factorization of $M$ makes us successfully  estimating the spectral norm of $M$ induced by $L_2$ norm in Theorem 3 and hence gives an estimates for the autocorrelation coefficients in Theorem 1.

By induction on \eqref{3}, we have $$\h{|P_n|^2}(2k)=\h{\hb{P_n}Q_n}(2k)=\h{P_n\hb{Q_n}}(2k)=0.$$
For $n \ge 1$, let $S_n$ be the set of all odd integers $k$ with $-L_n < k  < L_n$ and  let
\[
S_n^\tau:=\left\{ k \in S_n : (\tau -3)2^{n-1} < k \le (\tau -2)2^{n-1} \right\}
\]
so that $S_n$ is the disjoint union of $S_n^1, S_n^2, S_n^3$ and $S_n^4$.  (The definition of $S_n^\tau$ is slightly different in \cite{Ta1} in order to make $S_1=\{ -1, +1 \}$ is a union of 
$S_1^1, S_1^2, S_1^3$ and $S_1^4$.)

For $n \ge 2$, let $k_n$ be an odd integer in $S_n$. We define $k_{n-1}$ and $k_n'$ from $k_n$ as follows
\begin{equation}
k_n' := 
\begin{cases} 
k_n+2^n & \mbox{ if $k_n \in S_n^1 \cup S_n^2$,} \\
k_n-2^n & \mbox{ if $k_n \in S_n^3 \cup S_n^4$,} 
\end{cases}
\label{4}
\end{equation}
and
\begin{equation}
k_{n-1} := 
\begin{cases} 
k_n & \mbox{ if $k_n \in S_n^2 \cup S_n^3$,} \\
k_n' & \mbox{ if $k_n \in S_n^1 \cup S_n^4$.} 
\end{cases}
\label{5}
\end{equation}
It is easily to see that if $k_n \in S_n$, then $k_{n-1} \in S_{n-1}$ and $k_n' \in S_n$.

Let
\[
\omega_n(k_n) := \left( \begin{array}{c}\h{|P_n|^2}(k_n) \\ \h{\hb{P_n}Q_n}(k_n') \\ \h{P_n\hb{Q_n}}(k_n') \end{array}\right) .
\]
Lemma 1 below gives a recursive relation of $\omega_n(k_n)$. 
\begin{lem} For $n\ge 2$, we have
\begin{equation}
\label{6}
\w_n(k_n)= \M\w_{n-1}(k_{n-1}),
\end{equation}
where $\M$ is one of the following four $3\times 3$ matrices, whenever $k_n$ is in $S_n^1, S_n^2, S_n^3$ and $S_n^4$ respectively,
\[
A=
\begin{pmatrix}
0 & 0 & 1 \\
2 & -1 & 0 \\
2 & 1 & 0
\end{pmatrix} ,
B=
\begin{pmatrix}
0 & 0 & 1 \\
0 & -1 & 0 \\
0 & 1 & 0
\end{pmatrix} ,
C=
\begin{pmatrix}
0 & 1 & 0 \\
0 & 0 & 2 \\
0 & 0 & -1
\end{pmatrix} ,
D=
\begin{pmatrix}
0 & 1 & 0 \\
2 & 0 & 1 \\
2 & 0 & -1
\end{pmatrix} .
\]
\end{lem}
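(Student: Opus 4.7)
The plan is to derive the recursion by taking Fourier coefficients of both sides of the three polynomial identities~\eqref{3}. On $|z|=1$ one has $\overline{z}^{\,2^{n-1}}=z^{-2^{n-1}}$, so using $\widehat{z^m f}(k)=\widehat{f}(k-m)$ and noting that the additive constants $\pm 2^n$ affect only the $k=0$ coefficient, at every odd $k$ the identities~\eqref{3} yield
\begin{align*}
\widehat{|P_n|^2}(k) &= \widehat{\overline{P_{n-1}}Q_{n-1}}(k-2^{n-1})+\widehat{P_{n-1}\overline{Q_{n-1}}}(k+2^{n-1}),\\
\widehat{\overline{P_n}Q_n}(k) &= 2\widehat{|P_{n-1}|^2}(k)-\widehat{\overline{P_{n-1}}Q_{n-1}}(k-2^{n-1})+\widehat{P_{n-1}\overline{Q_{n-1}}}(k+2^{n-1}),\\
\widehat{P_n\overline{Q_n}}(k) &= 2\widehat{|P_{n-1}|^2}(k)+\widehat{\overline{P_{n-1}}Q_{n-1}}(k-2^{n-1})-\widehat{P_{n-1}\overline{Q_{n-1}}}(k+2^{n-1}).
\end{align*}
I would apply the first of these at $k=k_n$ (to produce the first component of $\omega_n(k_n)$) and the other two at $k=k_n'$ (to produce the second and third), expressing $\omega_n(k_n)$ as an integer linear combination of level-$(n-1)$ coefficients at the indices $k_n\pm 2^{n-1}$, $k_n'\pm 2^{n-1}$ and $k_n'$ itself.

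The substance of the argument is then to match these level-$(n-1)$ coefficients with the three entries of $\omega_{n-1}(k_{n-1})$, evaluated at $k_{n-1}$ and $k_{n-1}'$. Since level-$(n-1)$ Fourier coefficients are supported on $|j|\le L_{n-1}-1=2^{n-1}-1$, for each $\tau\in\{1,2,3,4\}$ the defining inequalities of $S_n^\tau$ determine which of the indices above lie in the admissible window; the out-of-window terms contribute zero. Invoking~\eqref{4}--\eqref{5} at both levels $n$ and $n-1$ then identifies each in-window shift with either $k_{n-1}$ or $k_{n-1}'$, at which point the three scalar identities collapse into $\omega_n(k_n)=\mathcal{M}\omega_{n-1}(k_{n-1})$, with $\mathcal{M}$ read off from which terms survive and whether they match $k_{n-1}$ or $k_{n-1}'$.

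For illustration take $\tau=1$. Here $k_n\in(-2^n,-2^{n-1}]$, $k_n'=k_n+2^n\in(0,2^{n-1}]$, $k_{n-1}=k_n'$, and~\eqref{5} applied at level $n-1$ gives $k_{n-1}'=k_{n-1}-2^{n-1}$. The shifts $k_n-2^{n-1}$ and $k_n'+2^{n-1}$ lie outside $|j|\le 2^{n-1}-1$ and contribute zero, while $k_n+2^{n-1}=k_n'-2^{n-1}=k_{n-1}'$ and $k_n'=k_{n-1}$ lie inside; substitution gives $\mathcal{M}=A$. In the case $\tau=2$ one finds that $k_n'$ itself is out of range, so $\widehat{|P_{n-1}|^2}(k_n')=0$; this kills the first column of $\mathcal{M}$ and one reads off $\mathcal{M}=B$. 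The cases $\tau=3$ and $\tau=4$ are mirror images of $\tau=2$ and $\tau=1$ under sign reversal of $k_n$ (equivalently, under the swap $\overline{P_{n-1}}Q_{n-1}\leftrightarrow P_{n-1}\overline{Q_{n-1}}$) and yield $\mathcal{M}=C$ and $\mathcal{M}=D$ respectively.

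The main obstacle is purely case-by-case bookkeeping: for each $\tau$ one has to read off from the inequalities defining $S_n^\tau$ which of the four shifts $k_n\pm 2^{n-1}$ and $k_n'\pm 2^{n-1}$ (and in which cases $k_n'$ itself) lies in the admissible window, and then invoke~\eqref{4}--\eqref{5} at level $n-1$ to identify each surviving shift with $k_{n-1}$ or $k_{n-1}'$. No analytic ingredient beyond~\eqref{3} and the support bound $|j|\le L_{n-1}-1$ enters the argument.
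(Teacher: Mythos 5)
Your proposal is correct and is essentially the paper's own proof: take Fourier coefficients of \eqref{3} (the constants $\pm 2^n$ vanish at odd frequencies), kill the shifts falling outside the support window $|j|\le 2^{n-1}-1$, and identify the surviving shifts with $k_{n-1}$ or $k_{n-1}'$ via \eqref{4}, \eqref{5} and \eqref{7}; the paper likewise writes out only the case $k_n\in S_n^1$ and dismisses the other three as similar calculations. One caveat: if you actually execute your $\tau=3$ computation (or your mirror-symmetry argument, which conjugates $B$ by the swap of the last two coordinates), the $(2,3)$-entry comes out as $1$, not $2$, i.e.\ the case yields $TBT=\left(\begin{smallmatrix}0&1&0\\0&0&1\\0&0&-1\end{smallmatrix}\right)$ — which is what the rest of the paper uses via the identity $C=TBT$, and which a direct check at $n=2$, $k_2=1$ confirms — so the ``$2$'' in the displayed $C$ of the lemma is a typo, and your assertion that the computation ``yields $\M=C$'' as printed should be stated as yielding this corrected matrix.
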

\begin{proof}
This is Lemma 1 in \cite{Ta1} but there are some typos in the original paper . Let record a correct version here.
First note that 
\begin{eqnarray}
&  & k_n \in S_n^1\cup S_n^3 \Rightarrow k_{n-1} \in S_{n-1}^3\cup S_{n-1}^4, \nonumber \\
&  & k_n \in S_n^2\cup S_n^4 \Rightarrow k_{n-1} \in S_{n-1}^1\cup S_{n-1}^2. \label{7}
\end{eqnarray}
Let $k_n \in S_n^1$. By  \eqref{4} and \eqref{5}, $k_{n-1}=k_n'=k_n+2^n$, so \eqref{7}, together with \eqref{4} and \eqref{5} again, imply that $k_{n-1}'=k_{n-1}-2^{n-1}=k_n+2^{n-1}$.

Now using \eqref{2}, we have
\begin{eqnarray*}
         \h{|P_n|^2}(k_n) 
& = & \h{z^{2^{n-1}}\hb{P_{n-1}}Q_{n-1}}(k_n)+ \h{\hb{z}^{2^{n-1}}P_{n-1}\hb{Q_{n-1}}}(k_n) \\
& = & \h{\hb{P_{n-1}}Q_{n-1}}(k_n-2^{n-1})+ \h{P_{n-1}\hb{Q_{n-1}}}(k_n+2^{n-1})  \\
& = & \h{\hb{P_{n-1}}Q_{n-1}}(k_n-2^{n-1})+ \h{P_{n-1}\hb{Q_{n-1}}}(k_{n-1}')  .
\end{eqnarray*}
As $k_n -2^{n-1} < 1 - 2^{n-1}$, by \eqref{3}, we have $\h{\hb{P_{n-1}}Q_{n-1}}(k_n-2^{n-1})=0$ 
which implies that 
\begin{equation}
\label{8}
 \h{|P_n|^2}(k_n)  = \h{P_{n-1}\hb{Q_{n-1}}}(k_{n-1}')  .
\end{equation}
Next using \eqref{2}, we have
\begin{eqnarray*}
&   &         \h{\hb{P_{n}}Q_{n}}(k_n') \\
& = & 2\h{|P_{n-1}|^2}(k_n') - \h{z^{2^{n-1}}\hb{P_{n-1}}Q_{n-1}}(k_n')   -  \h{\hb{z}^{2^{n-1}}P_{n-1}\hb{Q_{n-1}}}(k_n') \\
& = & 2\h{|P_{n-1}|^2}(k_{n-1}) - \h{\hb{P_{n-1}}Q_{n-1}}(k_n'-2^{n-1})   -  \h{P_{n-1}\hb{Q_{n-1}}}(k_n'+2^{n-1})  .
\end{eqnarray*}
(Note that there are some typos  for the above formulas in \cite{Ta1}.)

Since $k_n'+2^{n-1}>2^{n-1}-1$, by \eqref{3}, $\h{P_{n-1}\hb{Q_{n-1}}}(k_n'+2^{n-1})=0$ and as $k_n'-2^{n-1}=k_n+2^{n-1}=k_{n-1}'$, we have
\begin{equation}
\label{9}
  \h{\hb{P_{n}}Q_{n}}(k_n') = 2\h{|P_{n-1}|^2}(k_{n-1})  -  \h{\hb{P_{n-1}}Q_{n-1}}(k_{n-1}')  .
\end{equation}
A similar argument also gives
\begin{equation}
\label{10}
  \h{P_{n}\hb{Q_{n}}}(k_n') = 2\h{|P_{n-1}|^2}(k_{n-1})  + \h{\hb{P_{n-1}}Q_{n-1}}(k_{n-1}')  .
\end{equation}
Now \eqref{8}, \eqref{9} and \eqref{10} imply that in \eqref{6}, $\M=A$, whenever $k_n \in S_n^1$. Similar calculations yield \eqref{6} for the other three cases, which completes the proof of the lemma.
\end{proof}

Applying Lemma 1 inductively on $n$ , we get
\begin{equation}
\label{11}
\w_n(k_n) = \M_{n-1}\cdots \M_2 \M_1\w_1(k_1)
\end{equation}
with $\w_1(k_1)= \pm \begin{pmatrix} 1 \\ 1 \\ -1 \end{pmatrix}, k_1 = \pm 1, $ and 
\begin{equation}
\label{12}
\M_i \in T_1:=\{ A, B, C, D \} \quad i=1, \ldots ,n-1
\end{equation}
where $T_n$ is the set of all matrices which are a product of $n$ matrices in $\{ A, B, C, D \} $.

We consider any product of two consecutive terms in the matrix product \eqref{11}. Let $j \in \{ 1, \ldots , n-2 \}$, then 
\begin{equation}
\M_j\M_{j+1} \in T_2:=\left\{ AC, AD, B^2, BA, C^2, CD, DA, DB \right\}.
\label{13}
\end{equation}
To see this, suppose that $\M_j=A$, then $k_{n-j}\in S_{n-j}^1$ and so, by \eqref{7}, 
\[
k_{n-(j+1)} \in S_{n-(j+1)}^3 \cup S_{n-(j+1)}^4.
\]
Therefore, $\M_{j+1}\in \{ C, D \}$, that is, $\M_j\M_{j+1}$ is either $AC$ or $AD$. Similar arguments yield \eqref{13} for the cases that $\M_j$ is $B, C$ or $D$. 
In other words, 
\begin{enumerate}
\item[(i)] if $\M_j=A, C$, then $\M_{j+1}=C, D$ so that it produces $AC, AD, CC, CD$,
\item[(ii)] if $\M_j=B, D$, then $\M_{j+1}=A, B$ so that it produces $BA, BB, DA, DB$.
\end{enumerate}
Due to \eqref{7}, $T_2$ does not contain all 16 possible matrices but only 8 matrices. 

We next will show that every element in $T_n$ can be generated by only two matrices $B$ and $M_1$ in a very simple fashion (see Lemma \ref{lem 2}).

We  let
\[
T=
\begin{pmatrix}
1 & 0 & 0 \\
0 & 0 & 1 \\
0 & 1 & 0
\end{pmatrix},  \quad 
I=
\begin{pmatrix}
1 & 0 & 0 \\
0 & 1 & 0 \\
0 & 0 & 1
\end{pmatrix}  .
\]
Then we have
\[
T^2=I, \quad C =TBT \quad \mbox{ and } \quad D=TAT.
\]
Let
\begin{equation}
M_1=AT, \quad M_2=TA, \quad M_3=BT, \quad M_4=TB .
\label{30}
\end{equation}
Then we can write $T_2$ in terms of $M_j$:
\begin{eqnarray*}
T_2 
&=& \left\{ AC, AD, B^2, BA, C^2, CD, DA, DB \right\} \\
&=& \left\{ (AT)(BT), (AT)(AT), (BT)(TB), (BT)(TA), (TB)(BT), \right. \\
& & \qquad \qquad  \left. (TB)(AT), (TA)(TA), (TA)(TB) \right\} \\
&=& \left\{ M_1M_3, M_1M_1, M_3M_4, M_3M_2,M_4M_3, M_4M_1, M_2M_2, M_2M_4 \right\} .
\end{eqnarray*}
In view of (i) and (ii),  we have the following rules when we multiply $M_j$ together:
\begin{enumerate}
\item[(a)] After $M_1$, it is either $M_1$ or $M_3$ only;
\item[(b)] After $M_2$, it is either $M_2$ or $M_4$ only;
\item[(c)] After $M_3$, it is either $M_2$ or $M_4$ only;
\item[(d)] After $M_4$, it is either $M_1$ or $M_3$ only.
\end{enumerate}

\vspace{3mm}

We now assume $n$ is even and consider the possible factorization of $M$ in $T_n$. 

\vspace{3mm}

For any $M \in T_{n}$, then $M$ is a product of $n/2$ matrices, $M_iM_j\in T_2$. Then 
\begin{enumerate}
\item[(1)] If $M$ is starting from $M_1$, then in view of (a)-(d), $M$ is  one of these forms:
\begin{enumerate}
\item[(i)] $M=M_1^{\ell_1}$ for $\ell_1\ge 1$;
\item[(ii)] $M=M_1^{\ell_{1}}M_3$ for $\ell_1 \ge 1$;
\item[(iii)]  $M=M_1^{\ell_1}M_3M_4, \ell_1 \ge 1$;
\item[(iv)] $M=M_1^{\ell_1}M_3M_2^{\ell_{2}}$ for $\ell_1 \ge 1, \ell_2 \ge 1$; 
\item[(v)] $M=M_1^{\ell_{1}}M_3M_2^{\ell_{2}}M_4$ for $\ell_1 \ge 1, \ell_2 \ge 1$;
\item[(vi)] $M=(M_1^{\ell_{1}}M_3M_2^{\ell_{2}}M_4)\;M^*\cdots$ , for $\ell_1 \ge 1, \ell_2 \ge 0$, $M^*=M_1$ or $M_3$. 
\end{enumerate}
\item[(2)] If $M$ is starting from $M_2$, then in view of (a)-(d), $M$  is   one of these forms:
\begin{enumerate}
\item[(i)] $M=M_2^{\ell_2}$, for $\ell_2 \ge 1$;
\item[(ii)]  $M=M_2^{\ell_2}M_4$, for $\ell_2 \ge 1$;
\item[(iii)] $M=(M_2^{\ell_2}M_4)\;M^*\cdots$, for $\ell_2 \ge 1$, $M^*=M_1$ or $M_3$. 
\end{enumerate}
\item[(3)] if $M$ is starting from $M_3$, then in view of (a)-(d), $M$  is   one of these forms:
\begin{enumerate}
\item[(i)] $M=M_3M_2^{\ell_2}$, for $\ell_2 \ge 1$;
\item[(ii)]  $M=M_3M_2^{\ell_2}M_4$, for $\ell_2 \ge 1$;
\item[(iii)] $M=M_3M_4$;
\item[(iv)] $M=(M_3M_2^{\ell_2}M_4)\;M^*\cdots$, for $\ell_2 \ge 0$, $M^*=M_1$ or $M_3$. 
\end{enumerate}
\item[(4)] if $M$ is starting from $M_4$, then in view of (a)-(d),  $M$  is   one of these forms:
\begin{enumerate}
\item[(i)] $M=M_4$;
\item[(ii)] $M=(M_4)\;M^*\cdots$ for $M^*=M_1$ or $M_3$. 
\end{enumerate}
\end{enumerate}

We then rewrite $M$ as a string of block: $M=B_1B_2\cdots B_r$ by the appearance of $M_4$ so that  the first $r-1$ blocks are all ending with $M_4$
\[
B_k=(M_{i_1}M_{i_2}\cdots M_{i_t}M_4)
\]
for $1 \le k \le r-1$ and 
\[
B_r=(M_{i_1}M_{i_2}\cdots M_{i_t})M_4 \quad \mbox{ or } \quad B_r=(M_{i_1}M_{i_2}\cdots M_{i_t})
\]
where $M_{i_j} \in \{ M_1, M_2, M_3\}$.  

\vspace{3mm}

\noindent For the first block $B_1$, $B_1$ must be ending with $M_4$. According to the first matrix of $B_1$, we have the following 4 cases.
\begin{enumerate}
\item[$\bullet$] If $B_1$ is starting from $M_1$, then by (iii) and (v) of (1), $B_1$ is either $M_1^{\ell_1}M_3M_4$ or $M_1^{\ell_{1}}M_3M_2^{\ell_{2}}M_4.$  
\item[$\bullet$] If $B_1$ is starting from $M_2$, then by (ii) of (2), $B_1$ must be $M_2^{\ell_2}M_4.$  
\item[$\bullet$] If $B_1$ is starting from $M_3$, then by (ii)  and (iii) of (3), $B_1$ is either $M_3M_2^{\ell_{2}}M_4$ or $M_3 M_4.$
\item[$\bullet$] If $B_1$ is starting from $M_4$, then by (i) of (4), $B_1$ must be $M_4.$ 
\end{enumerate}
Hence
\begin{equation}
\label{4.1}
B_1\in \{ M_1^{\ell_{1}}M_3M_2^{\ell_{2}}M_4, \; M_1^{\ell_1}M_3M_4, \; M_2^{\ell_2}M_4, \; M_3M_2^{\ell_{2}}M_4, \; M_3 M_4, \; M_4: \ell_1,\ell_2 \ge 1 \} .
\end{equation}

\vspace{3mm}

\noindent For $B_2, B_3,\cdots ,B_{r-1}$, since the last matrix is $M_4$ in the previous block, then the first matrix in these blocks must be either $M_1$ or $M_3$ by requirement (d) above and the last matrix is $M_4$. Hence
\begin{equation}
\label{4.2}
B_k \in \{ M_1^{\ell_{1}}M_3M_2^{\ell_{2}}M_4, \; M_1^{\ell_1}M_3M_4, \; M_3M_2^{l_2}M_4, \; M_3M_4: \ell_1, \ell_2 \ge 1 \}
\end{equation}
for $2 \le k \le r-1$.

\vspace{3mm}

\noindent For the last block $B_r$, it is still starting from $M_1$ or $M_3$ but it is not necessary ending by $M_4$, so
\begin{multline}
\label{4.3}
B_r \in  \{ M_1^{\ell_1}, \; M_1^{\ell_1}M_3,  \; M_1^{\ell_1}M_3M_4, \; M_1^{\ell_{1}}M_3M_2^{\ell_{2}},\\ M_1^{\ell_{1}}M_3M_2^{\ell_{2}}M_4, \; M_3M_2^{\ell_2}, \; M_3M_2^{\ell_2}M_4, \; M_3M_4 :  \ell_1, \ell_2 \ge 1 \}.
\end{multline}

If there is only one block, i.e.,  $r=1$, then either there is no $M_4$ in $M$ or there is only one $M_4$ which is the last matrix in $M$. Then 
\begin{multline}
\label{4.4}
M \in  \{ M_1^{\ell_1}, \; M_1^{\ell_1}M_3,  \; M_1^{\ell_1}M_3M_4, \; M_1^{\ell_{1}}M_3M_2^{\ell_{2}}, \; M_1^{\ell_{1}}M_3M_2^{\ell_{2}}M_4, \\
M_2^{\ell_2}, \; M_2^{\ell_2}M_4, \; 
M_3M_2^{\ell_2}, \; M_3M_2^{\ell_2}M_4, \; M_3M_4 , \; M_4:  \ell_1, \ell_2 \ge 1 \}.
\end{multline}

As  a summary, we have just proved that for even $n$, if $M\in T_n$, then  $M=B_1B_2\cdots B_r$ with $B_1$  in \eqref{4.1}, $B_k$  in \eqref{4.2} for $2 \le k \le r-1$ and $B_r$ in \eqref{4.3} or $M$ is either one of \eqref{4.4}. 

For $L \ge 0$, we define the set
\begin{multline*}
\F := \{  B^{k_1}M_1^{\ell_1}B^{k_2}M_1^{\ell_2}\cdots B^{k_L}M_1^{\ell_L}B^{k_{L+1}}:\\  k_j \ge 1, (2 \le j \le L),  \ell_j \ge 1, (1 \le j \le L), k_1, k_{L+1} \ge 0 \} .
\end{multline*}
For $L=0$, we understand that $\prod_{j=1}^LM_1^{\ell_j}B^{k_{j+1}}=1$ so that $\F$ contains $B^{k_1}$ for $k_1 \ge 0$. 
The following lemma gives a nice factorization of $M$ in $T_n$.
\begin{lem} 
\label{lem 2}
For even $n$, every $M \in T_n$  can be written in the form of 
\begin{equation}
M= T^{\delta_1}B^{k_1}M_1^{\ell_1}B^{k_2}M_1^{\ell_2}\cdots B^{k_L}M_1^{\ell_L}B^{k_{L+1}}T^{\delta_2 } ,   \label{*}
\end{equation}
for $\ell_1, \ell_2, \ldots ,\ell_L  \ge 1, \; k_2, k_3, \cdots ,k_L \ge 1, \; ,  k_1, k_{L+1} \ge 0 $ and $ \delta_1, \delta_2\in \{ 0, 1\} .$
Also, every $M \in T_n$  can be written in the form of 
\begin{equation}
M= T^{\delta_1}B^{k_1}M_1^{\ell_1}B^{k_2}M_1^{\ell_2}\cdots B^{k_L}M_1^{\ell_L}B^{k_{L+1}}T^{\delta_2 } \label{**}
\end{equation}
for 
$\ell_1, \ell_2, \ldots ,\ell_L  \ge 1, \; k_2, k_3, \cdots ,k_L \in \{ 1,2 , 3\} , \; ,  k_1, k_{L+1} \in \{  0, 1, 2, 3\}  $ and $  \delta_1, \delta_2\in \{ 0, 1\} .$
\end{lem}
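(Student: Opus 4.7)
The plan is to build on the block decomposition $M = B_1 B_2 \cdots B_r$ from \eqref{4.1}--\eqref{4.4}, and to rewrite each block in the alphabet $\{T,B,M_1\}$ using the elementary identities $T^2 = I$, $M_2 = TM_1T$, $M_3 = BT$, $M_4 = TB$, each of which follows immediately from \eqref{30} and the involutivity of $T$. From these one derives the three auxiliary identities
\[
M_3 M_4 = B^2, \qquad M_3 M_2^{\ell} M_4 = B M_1^{\ell} B, \qquad M_2^{\ell} M_4 = T M_1^{\ell} B,
\]
together with $M_2^{\ell} = T M_1^{\ell} T$ (needed only in the corner case $r=1$), each by a one-line calculation in which every occurrence of $T^2$ collapses to $I$.

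First I would substitute these identities into every block form listed in \eqref{4.1}, \eqref{4.2} and \eqref{4.3}. A quick case check shows that every middle block from \eqref{4.2} becomes a word over $\{B, M_1\}$ containing no $T$ (for instance $M_1^{\ell_1} M_3 M_2^{\ell_2} M_4 = M_1^{\ell_1} B M_1^{\ell_2} B$); every first-block form from \eqref{4.1} contributes at most one $T$, always at the leftmost position (for instance $M_2^{\ell_2} M_4 = T M_1^{\ell_2} B$ and $M_4 = TB$); and every last-block form from \eqref{4.3} contributes at most one $T$, always at the rightmost position (for instance $M_3 M_2^{\ell_2} = B M_1^{\ell_2} T$ and $M_1^{\ell_1} M_3 = M_1^{\ell_1} BT$). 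The single-block case $r=1$ from \eqref{4.4} is checked analogously.

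Next, concatenating the rewritten blocks produces a word in $\{T, B, M_1\}$ in which $T$ appears only at the outer ends: a single $T^{\delta_1}$ on the left (from $B_1$) and a single $T^{\delta_2}$ on the right (from $B_r$), with $\delta_1, \delta_2 \in \{0, 1\}$. The remainder alternates powers of $B$ and powers of $M_1$: at every internal junction both sides are pure $B$-powers, which merge into a single $B^{k_j}$ by adding exponents, so every interior exponent satisfies $k_j \ge 1$ and every $\ell_j \ge 1$; only the outer exponents $k_1, k_{L+1}$ can vanish, precisely when $B_1$ begins with $M_1$ or $B_r$ ends with $M_1$. This is the form \eqref{*}.

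Finally, for the strengthened factorization \eqref{**} I would invoke the matrix identity $B^4 = B^2$, a quick direct computation: the range of $B^2$ is the one-dimensional span of $(1,1,-1)^{T}$, which $B^2$ fixes, so $(B^2)^2 = B^2$. This gives $B^{k+2} = B^k$ for every $k \ge 2$, hence $B^k \in \{B, B^2, B^3\}$ whenever $k \ge 1$ and $B^k \in \{I, B, B^2, B^3\}$ whenever $k \ge 0$, reducing the interior exponents in \eqref{*} to $\{1,2,3\}$ and the endpoint exponents to $\{0,1,2,3\}$. The main obstacle throughout is purely bookkeeping in the substitution step: one must verify across all of the block patterns in \eqref{4.1}--\eqref{4.4} that every $T$-letter ends up at the outer boundary, so that the $T$-collection in the concatenation step encounters no interior letter to cross.
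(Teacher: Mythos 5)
Your proposal is correct and follows essentially the same route as the paper: starting from the block decomposition \eqref{4.1}--\eqref{4.4}, rewriting each block via $M_3=BT$, $M_4=TB$, $M_2^{\ell}=TM_1^{\ell}T$ and $T^2=I$ so that all $T$'s land at the outer ends, concatenating (the paper phrases this as closure of $\F$ under products), and then reducing exponents with $B^4=B^2$ (equivalently $B^{k+2}=B^k$ for $k\ge 2$) to get the bounded form \eqref{**}. No gaps; your verification of $B^4=B^2$ via the range of $B^2$ is a fine substitute for the paper's direct computation.
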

\begin{proof}
Recall that if $M\in T_n$ for even $n$, then  $M=B_1B_2\cdots B_r$ with $B_1$  in \eqref{4.1}, $B_k$  in \eqref{4.2} for $2 \le k \le r-1$ and $B_r$ in \eqref{4.3} or $M$ is either one of \eqref{4.4}. 

We now consider the case $M=B_1B_2\cdots B_r$. We study the terms in $B_1$  in \eqref{4.1} first. For example, the first the term  in the list  of \eqref{4.1} is
\begin{multline}
M_1^{\ell_1}M_3M_2^{\ell_{2}}M_4=M_1^{\ell_1}BT\underbrace{(TA)(TA)\cdots (TA)}_{\ell_2 \mbox{ times}}TB \\
= M_1^{\ell_1}B\underbrace{(AT)(AT)\cdots (AT)}_{\ell_2 \mbox{ times}}B=M_1^{\ell_1}BM_1^{\ell_{2}}B
\end{multline}
 by \eqref{30}.  Similarly for the other terms in \eqref{4.1}, we  have
 \[
 M_1^{\ell_1}M_3M_4=M_1^{\ell_1}B^2, \;\; M_2^{\ell_2}M_4=TM_1^{\ell_2}B, \;\; M_3M_2^{\ell_2}M_4=BM_1^{\ell_2}B, \;\; M_3M_4=B^2, \;\; M_4=TB.
 \]
 and they all belong to $T^{\delta_1}\F T^{\delta_2}$. Therefore
\[
B_1 \in \{ M_1^{\ell_1}BM_1^{\ell_{2}}B, M_1^{\ell_1}B^2, TM_1^{\ell_2}B, BM_1^{\ell_2}B, B^2, TB \}  \subseteq T^{\delta_1}\F T^{\delta_2}, 
\]
and, for $2 \le k \le r-1$
\[
B_k \in  \{ M_1^{\ell_1}BM_1^{\ell_{2}}B, M_1^{\ell_1}B^2, BM_1^{\ell_2}B, B^2 \} \subseteq T^{\delta_1}\F T^{\delta_2}
\]
because the matrices in the list of \eqref{4.2} are also in \eqref{4.1} where $\ell_1, \ell_2 \ge 1$ and $\delta_1, \delta_2 \in \{ 0,1 \} $. We now consider the last block $B_r$. The matrices in \eqref{4.3} not in \eqref{4.1} are 
\[
M_1^{\ell_1}, \;\; M_1^{\ell_1}M_3=M_1^{\ell_1}BT, \;\; M_1^{\ell_1}M_3M_2^{\ell_2}=M_1^{\ell_1}BM_1^{\ell_2}T, \;\; M_3M_2^{\ell_2}=BM_1^{\ell_2}T.
\]
and therefore
\[
B_r \in \{ M_1^{\ell_1}, M_1^{\ell_1}BT, M_1^{\ell_1}B^2, M_1^{\ell_1}BM_1^{\ell_{2}}T,  M_1^{\ell_1}BM_1^{\ell_{2}}B, BM_1^{\ell_2}T, BM_1^{\ell_2}B, B^2 \} \subseteq T^{\delta_1}\F T^{\delta_2}
\]
where $\ell_1, \ell_2 \ge 1$ and $\delta_1, \delta_2 \in \{ 0,1 \} $.

We now consider the remaining case when $M$ is one of \eqref{4.4}.  Then the only matrix  in \eqref{4.4} but not in \eqref{4.1}, \eqref{4.2} and \eqref{4.3}  is $M=M_2^{\ell_2}=TM_1^{\ell_2}T \in T^{\delta_1}\F T^{\delta_2}$ where $\ell_1, \ell_2 \ge 1$ and $\delta_1, \delta_2 \in \{ 0,1 \} $.

Lastly, it is clearly that if $P_1, P_2 \in \F$, then $P_1P_2\in \F$. Hence, We have proved that  every $M\in T_n$ can be written in the form of  
\[
M= T^{\delta_1}B^{k_1}M_1^{\ell_1}B^{k_2}M_1^{\ell_2}\cdots B^{k_L}M_1^{\ell_L}B^{k_{L+1}}T^{\delta_2 } 
\]
for $k_j \ge 1, (2 \le k \le L),  \ell_j \ge 1, (1 \le j \le L), k_1, k_{L+1} \ge 0$, and $\delta_1, \delta_2=0, 1$. Also $\sum k_j +\sum \ell_j = n$. 
This proves the first assertion of the lemma. 

For the second assertion, we observe that $B^4=B^2$ and $B^5=B^3$. This completes the proof of the lemma.
\end{proof}

Note that in \eqref{*}  we have
$$\sum_{j=1}^L (k_j +\ell_j) +k_{L+1}= n.$$
and in \eqref{**}, we have
$$\sum_{j=1}^L (k_j +\ell_j)+k_{L+1} \le n$$
because $k_j \in \{  1, 2, 3 \}$ for $2 \le j \le L$.

The characteristic polynomial of $M_1^2=AD$ is $g(x)=x^3-5x^2+12x-16$.  Then $g(x)=0$ has one real root $\lambda$ and two complex roots $\lambda'$ and $\hb{\lambda'}$  with $|\lambda | > |\lambda'|$ and
\[
\lambda =\frac{(71+6\sqrt{177})^{1/3}- (-71+6\sqrt{177})^{1/3}+5}{3} =2.75217177\cdots
\]
and
\begin{eqnarray*}
\lambda' &=& -\frac{(71+6\sqrt{177})^{1/3}-(-71+6\sqrt{177})^{1/3}-10}{6}  \\
& & \qquad +i\sqrt{3}\left( \frac{(71+6\sqrt{177})^{1/3}+ (-71+6\sqrt{177})^{1/3}}{6}\right) \\ 
&=&1.12391411\cdots +2.13316845\cdots i.
\end{eqnarray*}
Since these eigenvalues are distinct, there is a nonsingular  matrix $S$ such that $S^{-1}M_1^2S=\Lambda$ with $\Lambda
=\mbox{diag}[\lambda , \lambda' ,\hb{\lambda'}]$, the diagonal matrix with diagonal elements $\lambda , \lambda'$ and $\hb{\lambda'}$ respectively. Since 
\begin{equation}
M_1^{2k }=S \Lambda^{k} S^{-1}
\label{19}
\end{equation}
where
\[
S:=
\begin{pmatrix}
s_1 & s_3 &  \hb{s_3} \\ s_2  & s_4 & \hb{s_4}\\
1 & 1 & 1
\end{pmatrix} ,
\] 
\[
s_1:=\frac{1}{66}\left( (10-\sqrt{177})(71+6\sqrt{177})^{1/3}  - (10+\sqrt{177})(-71+6\sqrt{177})^{1/3}   -22\right) ,
\]
\[
s_2:=\frac{1}{66} \left(\left( 1+\sqrt {177} \right) ({71+6\,\sqrt {177}})^{1/3} - \left( 1-\sqrt {177} \right) ({-71+6\,\sqrt {177}})^{1/3}+44\right) ,
\]
\begin{multline*}
s_3:=-\frac{1}{132} \left(  \left( 1-i
\sqrt {3} \right)  \left( 10- \sqrt {177} \right) (71+6\,
\sqrt {177})^{1/3} \right.\\ \left. - \left( 1+i\sqrt {3} \right) \left( 10+\sqrt {177} \right)  (-71+6\sqrt {177})^{1/3} +44 \right)
\end{multline*}
and
\begin{multline*}
s_4:=\frac {1}{132}\left( -  \left( 1-i\sqrt {3} \right) \left( 1+\sqrt {177} \right) 
(71+6\,\sqrt {177})^{1/3}\right.\\ \left.
+ \left( 1+i\sqrt {3}
 \right)  \left( 1-\sqrt {177} \right) (-71+6\,\sqrt {177})^{1/3}+88 \right) .
\end{multline*}

For any matrix $M$, let $\| M \|$ be the spectral norm of $M$ induced by the $L_2$ norm. That is if $M$ is an $n\times n$ matrix, $M^{\ast}$ is its conjugate transpose and $\lambda_1, \lambda_2, \ldots , \lambda_n$ are the eigenvalues of $M^{\ast} M$ (the eigenvalues are all real), then $\| M \| = \sqrt{\max \{ \lambda_1, \lambda_2, \ldots , \lambda_n \}}$. 
It is known that the spectral radius of $M$ is less than $\| M \|$. We should remark that spectral norm satisfies the submultiplicativity, i.e,  $\| AB \| \le \| A\| \| B \|$ but spectral radius does not.

For any positive integer $k$, if $\ell$ is even, then we have
 \[
 \| M_1^\ell\| = \| S \Lambda^{\ell /2} S^{-1}  \| \le \| S \| \| \Lambda^{\ell/2} \| \| S^{-1} \|  \ll    \lambda^{\ell/2} 
 \]
because  $\| \Lambda^{\ell /2}  \| = \lambda^{\ell /2}$.
If $\ell$ is odd, then we have
 \begin{eqnarray*}
 \| M_1^\ell\| &=&\|M_1 M_1^{\ell-1}\|=\| M_1S \Lambda^{(\ell-1)/2} S^{-1}  \| \\
 & \le & \| M_1\| \| S \| \| \Lambda^{(\ell-1)/2} \| \| S^{-1} \|  \ll  \lambda^{(\ell-1)/2} \ll  \lambda^{\ell/2}.
 \end{eqnarray*}
 Therefore, for any $\ell \ge 1$, we have 
 \begin{equation}
 \label{20}
 \| M_1^\ell \| \ll \lambda^{\ell/2}.
  \end{equation}
Since the implicit constant in \eqref{20} may not be less than $1$, so we cannot apply this estimate directly to \eqref{**} to prove that $\| M \| \ll \lambda^{n/2}$ for any $M\in T_n$ because the implicit constants may be multiplied together to be large when $L$ is in the order of $n$.  As a result, we need to make use of the factorization in \eqref{**} when we  estimate $\| M \|$. Among all the terms $M_1^{\ell}B^{k}$, the term $M_1B$ is the worst because $\| M_1B \| > \lambda$ and needs special consideration (see Lemma 4 below).
  
The following lemma deals with the product of $2L$ matrices without $M_1B$ factor in \eqref{*} or  \eqref{**}.
\begin{lem} 
\label{lem 3}
We have
\begin{equation}
\label{21}
\| (M_1^{\ell_1}B^{k_1})(M_1^{\ell_2}B^{k_2}) \cdots (M_1^{\ell_{L-1}}B^{k_{L-1}}M_1^{\ell_L}B^{k_{L}}) \| \le ((1.0000005)^2\lambda)^{\frac12 \sum_{j=1}^L(k_j+\ell_j)}
\end{equation}
for any $(k_j, \ell_j)$ with $k_j \in \{ 1, 2, 3 \}$, $\ell_j \geq 1$ 
and $(\ell_j,k_j)\neq (1,1)$ for $1\le j \le L$. 
\end{lem}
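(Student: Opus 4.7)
The plan is to reduce the claim to a single-block estimate via submultiplicativity of the spectral norm, and then handle each block using the diagonalization of $M_1^2$. Since $\|XY\| \leq \|X\|\|Y\|$,
\[
\bigl\| (M_1^{\ell_1} B^{k_1}) \cdots (M_1^{\ell_L} B^{k_L}) \bigr\| \leq \prod_{j=1}^{L} \|M_1^{\ell_j} B^{k_j}\|,
\]
so it suffices to show that for every admissible pair ($k \in \{1,2,3\}$, $\ell \geq 1$, $(\ell, k) \neq (1,1)$),
\[
\|M_1^{\ell} B^{k}\| \leq \bigl((1.0000005)^2 \lambda\bigr)^{(\ell + k)/2}. \qquad (\star)
\]
Multiplying these block-wise inequalities immediately yields \eqref{21}.

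For $(\star)$, I would use the diagonalization $M_1^{2m} = S\Lambda^m S^{-1}$ from \eqref{19}. Denoting the columns of $S$ by $v_1, v_2, v_3$ and the rows of $S^{-1} B^k$ by $u_1^T, u_2^T, u_3^T$, one expands
\[
M_1^{2m} B^k = \lambda^m v_1 u_1^T + (\lambda')^m v_2 u_2^T + (\overline{\lambda'})^m v_3 u_3^T,
\]
and then the triangle inequality for the spectral norm gives
\[
\|M_1^{2m} B^k\| \leq \|v_1\|\|u_1\|\, \lambda^m + |\lambda'|^m \bigl( \|v_2\|\|u_2\| + \|v_3\|\|u_3\| \bigr).
\]
For odd $\ell = 2m+1$, left-multiply by $M_1$ to pick up at most an extra factor of $\|M_1\| = 2\sqrt{2}$. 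Since $|\lambda'| < \lambda$, this produces a bound $\|M_1^\ell B^k\| \leq c_{k,\varepsilon}\, \lambda^{\ell/2}$ for absolute constants $c_{k,\varepsilon}$ (indexed by the parity $\varepsilon = \ell \bmod 2$) that are explicitly computable from the formulas for $S$, $\lambda$, $\lambda'$ given after \eqref{19}.

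Comparing with $(\star)$, the desired inequality reduces to $c_{k,\varepsilon} \leq (1.0000005)^{\ell+k}\, \lambda^{k/2}$. Since the right-hand side grows in $\ell$, the inequality holds for all $\ell + k$ above an explicit threshold $N(k)$ determined by the constants $c_{k,\varepsilon}$. For the finitely many remaining pairs with $\ell + k \leq N(k)$, I would verify $(\star)$ directly by computing $\|M_1^\ell B^k\|$ as the largest singular value of the concrete $3 \times 3$ matrix $M_1^\ell B^k$. The constant $(1.0000005)^2$ is chosen precisely to accommodate the worst of these small cases.

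The main obstacle is the extreme tightness of $(\star)$: since $\log(1.0000005) \sim 5 \times 10^{-7}$, the slack over the pure bound $\lambda^{(\ell+k)/2}$ is essentially nil, so the error terms $|\lambda'|^m$ in the diagonalization expansion must be controlled much more sharply than the crude triangle-inequality estimate above allows, in order to keep the threshold $N(k)$ manageable. The exclusion $(\ell, k) = (1,1)$ is necessary and sharp: a direct calculation gives $\|M_1 B\| = 2\sqrt{2}$, which already exceeds $(1.0000005)^2 \lambda \approx 2.7522$, so this block alone violates $(\star)$ and must be treated separately in the subsequent Lemma 4.
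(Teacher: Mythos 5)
Your proposal is correct and follows essentially the same route as the paper: submultiplicativity reduces \eqref{21} to the single-block bound $\| M_1^{\ell}B^{k}\| \le ((1.0000005)^2\lambda)^{(\ell+k)/2}$, which the paper also proves by diagonalizing $M_1^2$ to get $\| M_1^{\ell}B^{k}\| \le c_k\lambda^{(\ell+k)/2}$ (with $c_k$ built from $\|S\|$, $\|S^{-1}B^k\|$, resp.\ $\|S^{-1}M_1B^k\|$ for odd $\ell$) and then checking the finitely many $\ell$ below the resulting threshold by machine (roughly $1.3\times 10^6$ cases for $k=1$). Your slightly cruder constants (triangle inequality over the three eigen-components and the extra factor $\|M_1\|=2\sqrt{2}$ for odd $\ell$) would only enlarge the finite verification range, not change the argument.
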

\begin{proof}
We will prove \eqref{21} by induction on $L$. 

We first claim that 
\begin{equation}
\label{22}
\| M_1^{\ell}B^k\| \le ((1.0000005)^2\lambda)^{\frac12 (\ell +k)}
\end{equation}
for all $k \in \{1, 2, 3\} , \ell \geq 1$ and $(\ell ,k)\neq (1,1)$.

If $\ell$ is even, then
\begin{multline*}
\| M_1^\ell B^k \| = \| (M_1^2)^{\ell/2}B^k \| =\| S \Lambda^{\ell /2} S^{-1} B^k\| \\
\le \| S \|\cdot \| S^{-1} B^k\| \lambda^{\ell /2} =\frac{\| S\| \| S^{-1} B^k\| }{\lambda^{k/2}}\lambda^{\frac12 (\ell+k)}=c_k \lambda^{\frac12 (\ell+k)}
\end{multline*}
where $c_k = \frac{\| S\| \| S^{-1} B^k\| }{\lambda^{k/2}}$. If 
$\ell \ge \frac{\log c_k}{\log 1.0000005} -k$, then 
\[
c_k \le 1.0000005^{\ell +k}.
\] 
It then follows that 
\begin{equation}
\label{***}
\| M_1^\ell B^k \| \le c_k \lambda^{\frac12 (\ell+k)} \le ((1.0000005)^2\lambda)^{\frac12 (\ell +k)}.
\end{equation}
It remains to check that \eqref{22} holds for $\ell \le \frac{\log c_k}{\log 1.0000005} -k$. Using
 {\tt Maple}, we can compute $\| S\|$ and $ \| S^{-1} B^k\|$ and find out that 
\[
\frac{\log c_k}{\log 1.0000005} -k = 
\begin{cases}
1318902.018 & \mbox{ when } k=1, \\
991945.7928 &  \mbox{ when } k=2,\\
-20445.79861 & \mbox{ when } k=3.
\end{cases}
\]
We then use {\tt Maple} to check \eqref{22} holds for $\ell \le 1318902$ when $k=1$ and 
$\ell \le 991945$ when $k=2$ and do not need to check for $k=3$. 

Similarly, if $\ell$ is odd, then
\begin{multline*}
\| M_1^\ell B^k \| = \| (M_1^2)^{(\ell-1)/2}M_1B^k \| =\| S \Lambda^{(\ell -1)/2} S^{-1} M_1B^k\| \\ 
\le \| S\| \| S^{-1}M_1 B^k\|\lambda^{\frac12 (\ell -1)}= \frac{\| S\| \| S^{-1}M_1 B^k\| }{\lambda^{(k+1)/2}}\lambda^{\frac12 (\ell+k)}=c_k' \lambda^{\frac12 (\ell+k)}
\end{multline*}
where $c_k' = \frac{\| S\| \| S^{-1} M_1B^k\| }{\lambda^{(k+1)/2}}$. If 
$\ell \ge \frac{\log c_k'}{\log 1.0000005} -k$, then 
\[
\| M_1^\ell B^k \| \le c_k' \lambda^{\frac12 (\ell+k)} \le ((1.0000005)^2\lambda)^{\frac12 (\ell +k)}.
\]
It remains to check that \eqref{22} holds for $\ell \le \frac{\log c_k'}{\log 1.0000005} -k$. Using 
{\tt Maple} again, we have
\[
\frac{\log c_k'}{\log 1.0000005} -k = 
\begin{cases}
1187950.952 & \mbox{ when } k=1, \\
862238.8518  &  \mbox{ when } k=2, \\
-150152.7391 & \mbox{ when } k=3.
\end{cases}
\]
We do not need to check this again because these bounds are smaller than the bounds in the
case where $\ell$ is even.  This proves our claim. Hence \eqref{21} is true for $L=1$. 

We now assume that \eqref{21} is true for $L-1$. Then we have
\begin{eqnarray*}
&    & \| M_1^{\ell_1}B^{k_1}M_1^{\ell_2}B^{k_2} \cdots M_1^{\ell_L}B^{k_{L}} \|  \\
& \le & \| M_1^{\ell_1}B^{k_1} \| \cdot \| M_1^{\ell_2}B^{k_2} \cdots M_1^{\ell_L}B^{k_{L}} \|  \\
& \le & ((1.0000005)^2\lambda)^{\frac12 (\ell_1 +k_1)} ((1.0000005)^2\lambda)^{\frac12 \sum_{j=2}^{L}(\ell_j +k_j)} \\
&  = & ((1.0000005)^2\lambda)^{\frac12 \sum_{j=1}^{L}(\ell_j +k_j)}.
\end{eqnarray*}
This proves our lemma.
\end{proof}
The calculations in the proof of Lemma 3 are simple in using  {\tt Maple} and they have been verified independently a couple of times. 

The term $M_1B$ is bad because $\| M_1B \| > (1.0000005)^2\lambda$ which does not satisfy \eqref{21}. For handling this term, we have the following lemma.

\begin{lem}
\label{lem 4} 
We have 
\begin{equation}
\| (M_1B)^{\ell} \| \le  ((1.0000005)^2\lambda )^\ell
\label{*.7}
\end{equation}
for any $\ell \ge 2$ and 
\begin{equation}
\label{2.1}
\| M_1BM_1^\ell B^k \| \le ((1.0000005)^2\lambda)^{(\ell+k+2)/2}
\end{equation}
for $\ell \ge 1$ and $0 \le k \le 3$. Note that $\| M_1B \| = 2\sqrt{2} > (1.0000005)^2\lambda$ so that \eqref{*.7} does not hold for $\ell =1$.
\end{lem}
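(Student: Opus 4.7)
The plan is to mimic the diagonalization strategy used in Lemma \ref{lem 3}, but applied to the ``bad'' block $M_1B$ itself. First, I would compute the spectral data of $M_1 B$ directly. From the definitions of $M_1 = AT$ and $B$, a routine multiplication gives
\[
M_1 B = \begin{pmatrix} 0 & -1 & 0 \\ 0 & -1 & 2 \\ 0 & 1 & 2 \end{pmatrix},
\]
whose characteristic polynomial is $\mu(\mu^2-\mu-4)$. The three eigenvalues $0,\ \alpha_\pm := (1\pm\sqrt{17})/2$ are distinct, so $M_1 B$ is diagonalizable: $M_1 B = V D V^{-1}$ with $D=\mathrm{diag}(\alpha_+,\alpha_-,0)$. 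The crucial observation is
\[
\alpha_+ = \frac{1+\sqrt{17}}{2}=2.5615\ldots < (1.0000005)^2\lambda = 2.7521\ldots,
\]
so the spectral radius of $M_1 B$ already lies \emph{below} the target growth rate.

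To prove \eqref{*.7}, I would write $(M_1 B)^\ell = V D^\ell V^{-1}$ and apply submultiplicativity:
\[
\|(M_1 B)^\ell\| \le \|V\|\,\|V^{-1}\|\,\alpha_+^{\,\ell} =: c\,\alpha_+^{\,\ell}.
\]
Since $(1.0000005)^2\lambda/\alpha_+ > 1$, the inequality $c\,\alpha_+^{\,\ell}\le((1.0000005)^2\lambda)^\ell$ holds for every $\ell$ exceeding an explicit threshold $\ell_0 = \lceil \log c/\log((1.0000005)^2\lambda/\alpha_+)\rceil$. The bound necessarily fails at $\ell=1$, since $\|M_1 B\|=2\sqrt 2 > (1.0000005)^2\lambda$; this is exactly why the statement starts at $\ell\ge 2$. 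The finitely many cases $2\le\ell\le\ell_0$ would then be checked directly in \texttt{Maple}, just as was done in the proof of Lemma \ref{lem 3}.

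For \eqref{2.1}, I would reuse the diagonalization $M_1^2 = S\Lambda S^{-1}$ already introduced before Lemma \ref{lem 3}. Writing
\[
M_1^\ell = \begin{cases} S\,\Lambda^{\ell/2}\,S^{-1}, & \ell\text{ even},\\ M_1\,S\,\Lambda^{(\ell-1)/2}\,S^{-1}, & \ell\text{ odd},\end{cases}
\]
gives, for even $\ell$,
\[
\|M_1 B M_1^\ell B^k\| \le \|M_1 B S\|\,\|S^{-1} B^k\|\,\lambda^{\ell/2} = d_k\,\lambda^{(\ell+k+2)/2},
\]
where $d_k := \|M_1 B S\|\,\|S^{-1} B^k\|\,\lambda^{-(k+2)/2}$, and the target inequality reduces to the linear condition $\ell+k+2\ge \log d_k/\log(1.0000005)$. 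For odd $\ell$, the analogous computation with $M_1 B M_1$ in place of $M_1 B$ yields a bound of the same shape. In both cases the finitely many small $(\ell,k)$ below the threshold are handled in \texttt{Maple} in the same spirit as \eqref{***}.

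The only real obstacle is the numerical one: the constants $\|V\|\,\|V^{-1}\|$ and $\|M_1 B S\|\,\|S^{-1} B^k\|$ (and the odd-$\ell$ analogue $\|M_1 B M_1 S\|$) must be small enough that the slack factor $(1.0000005)^2$ can absorb them within a finite, checkable initial range. All the analytic content of the lemma is then concentrated in the single inequality $\alpha_+ < (1.0000005)^2\lambda$, which holds with a comfortable margin, and everything else is a routine (if lengthy) algebraic and numerical verification.
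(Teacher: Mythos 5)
Your treatment of \eqref{*.7} is essentially the paper's own proof: the paper likewise diagonalizes $M_1B$ (eigenvalues $\tfrac{1\pm\sqrt{17}}{2}$ and $0$), bounds $\|(M_1B)^{\ell}\|\le \|S_1\|\,\|S_1^{-1}\|\bigl(\tfrac{1+\sqrt{17}}{2}\bigr)^{\ell}$ with $\|S_1\|\,\|S_1^{-1}\|=5.615\ldots$, notes that this is below $((1.0000005)^2\lambda)^{\ell}$ once $\ell\ge 25$, and checks $2\le\ell\le 24$ in {\tt Maple}; your threshold $\ell_0$ comes out to the same place, and your identification of $\alpha_+<(1.0000005)^2\lambda$ as the decisive inequality is exactly right. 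For \eqref{2.1}, however, you take a genuinely different route: you absorb the bad prefix $M_1B$ into the constant via $\|M_1BS\|\,\|S^{-1}B^k\|\lambda^{\ell/2}$ (and the analogous odd-$\ell$ variant). This is valid, but since the per-step slack is only $1.0000005$, your constants $d_k$ will exceed $1$ (they are of the same size as the $c_k$ in Lemma \ref{lem 3}), so the threshold on $\ell+k+2$ is of order $10^6$ and your ``finitely many small cases'' is in fact a machine check of roughly a million values of $\ell$ for each $k$ --- feasible, and of the same magnitude as what was already done for Lemma \ref{lem 3}, but heavy. The paper instead factors, for $\ell\ge 6$, $M_1BM_1^{\ell}B^k=(M_1BM_1^4)(M_1^{\ell-4}B^k)$, verifies the single inequality $\|M_1BM_1^4\|=19.978\ldots<((1.0000005)^2\lambda)^{3}=20.846\ldots$, and applies Lemma \ref{lem 3} to the tail (legitimate since $\ell-4\ge 2$ forces $(\ell-4,k)\ne(1,1)$), leaving only the fifteen cases $1\le\ell\le 5$, $1\le k\le 3$ for direct computation. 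So both arguments prove the lemma; yours trades the reuse of Lemma \ref{lem 3} for a much larger (but finite and mechanical) numerical verification, while the paper's splitting keeps the computation minimal. Do make sure your small-case sweep also covers $k=0$, which the statement allows; your diagonalization handles it with $B^0=I$ at no extra conceptual cost.
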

\begin{proof}
First we write, $M_1B=S_1\Lambda_1S_1^{-1}$ where $\Lambda_1
=\mbox{diag}[\frac{1+\sqrt{17}}{2}, \frac{1-\sqrt{17}}{2}, 0]$ and 
 \[
 S_1=\begin{pmatrix}
\frac{-5+\sqrt{17}}{4}  & \frac{-5-\sqrt{17}}{4}  &  1 \\ \frac{-3+\sqrt{17}}{2}   & \frac{-3-\sqrt{17}}{2}  & 0\\
1 & 1 & 0
\end{pmatrix} .
 \]
 Hence,  we have
 \begin{eqnarray}
  \| (M_1B)^{\ell} \| &= & \| S_1 \Lambda_1^{\ell} S_1^{-1} \| \le \| S_1\| \cdot \| S_1^{-1} \| \|  \Lambda_1^{\ell} \| \nonumber  \\
 & = & (5.61541131\cdots ) \left( \frac{1+\sqrt{17}}{2}\right)^\ell \nonumber  \\
 & \le & ((1.0000005)^2\lambda )^\ell  \nonumber 
 \end{eqnarray}
 for $\ell \ge 25$ because $\| S_1\|=4.38008933\cdots , \| S_1^{-1} \| = 1.282031231$ and $\|  \Lambda_1^{\ell} \|  = \left( \frac{1+\sqrt{17}}{2}\right)^\ell$.  For $2 \le \ell \le 24$, we use {\tt Maple} to check directly that \eqref{*.7} is true. This proves \eqref{*.7}.

For $\ell \ge 6$, we have
\begin{eqnarray*}
\| M_1BM_1^\ell B^k \| & =& \| M_1BM_1^4 M_1^{\ell -4} B^k \| \\
& \le & \| M_1BM_1^4\| \cdot \| M_1^{\ell -4} B^k \| \\
& \le & ((1.0000005)^2\lambda)^{3}((1.0000005)^2\lambda)^{(\ell -4+k)/2} \\
& \le & ((1.0000005)^2\lambda)^{(\ell +k +2)/2} 
\end{eqnarray*}
by Lemma \ref{lem 3} with $(\ell -4,k)\neq (1,1)$ and 
\[
\| M_1BM_1^4\|=19.97828015\cdots <  20.84624870=((1.0000005)^2\lambda)^{3}.
\]
For $1\le \ell \le 5$ and $1 \le k \le 3$, we use {\tt Maple} to check that \eqref{2.1} is true for these 
$\ell$ and $k$. Among these 15 terms, we have
\[
\frac{\| M_1BM_1^\ell B^k \|}{((1.0000005)^2\lambda)^{(\ell+k+2)/2}} \le 0.92894011 < 1.
\]
This proves \eqref{2.1}.
\end{proof}


\begin{thm} 
\label{thm 2}
We have 
\begin{equation}
\label{2.2}
\| M \| \ll ((1.0000005)^2\lambda)^{n/2} 
\end{equation}
 for any $M \in T_n$. 
\end{thm}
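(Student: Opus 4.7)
The plan is to combine the factorization from Lemma \ref{lem 2} with the block estimates of Lemmas \ref{lem 3} and \ref{lem 4}. Concretely, by the second assertion of Lemma \ref{lem 2}, any $M \in T_n$ has the form
$$M = T^{\delta_1} B^{k_1} N_1 N_2 \cdots N_L T^{\delta_2}, \quad N_j := M_1^{\ell_j} B^{k_{j+1}},$$
with $k_j \in \{1,2,3\}$ for $2 \le j \le L$, $k_1, k_{L+1} \in \{0,1,2,3\}$, $\ell_j \ge 1$, and $\sum_{j=1}^L \ell_j + \sum_{j=1}^{L+1} k_j \le n$. Since $T$ is orthogonal we have $\|T^{\delta_i}\|=1$, and since $k_1 \le 3$ the prefactor $B^{k_1}$ contributes at most an absolute constant to the spectral norm. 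So the task reduces to estimating $\|N_1 N_2 \cdots N_L\|$.

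The key observation is that Lemma \ref{lem 3} already bounds any product of consecutive blocks $N_j$ in which none equals $M_1 B$ (i.e.\ $(\ell_j, k_{j+1}) \ne (1,1)$), while Lemma \ref{lem 4} is designed precisely to absorb the problematic $M_1 B$ factors. Accordingly, I would scan $N_1 \cdots N_L$ from left to right and partition it into consecutive meta-blocks of three kinds: (a) maximal runs of non-$M_1B$ blocks, bounded by Lemma \ref{lem 3}; (b) maximal runs $(M_1 B)^r$ with $r \ge 2$, bounded by the first estimate of Lemma \ref{lem 4}; (c) any \emph{isolated} $M_1 B$ coupled with the following non-$M_1B$ block $M_1^{\ell} B^{k}$ to form $M_1 B M_1^{\ell} B^{k}$, bounded by the second estimate of Lemma \ref{lem 4} (applicable since $(\ell,k)\ne(1,1)$, $k\le 3$, and $\ell \ge 1$). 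In each case, the norm is at most $((1.00000050)^2\lambda)^{\frac12 (\text{total exponent of the meta-block})}$. Multiplying over all meta-blocks via submultiplicativity, the exponents telescope to at most $\tfrac12 \sum(\ell_j + k_{j+1}) \le n/2$, yielding $\|N_1 \cdots N_L\| \le ((1.00000050)^2 \lambda)^{n/2}$.

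A few edge cases must be addressed. If $k_{L+1}=0$, then $N_L = M_1^{\ell_L}$ is a bare $M_1$-power; I would peel it off and bound it using estimate \eqref{20}. If $N_L = M_1 B$ appears as an isolated trailing $M_1 B$ with no successor to pair with (and no predecessor left to group with non-trivially), I would split it off and bound its contribution by the absolute constant $\|M_1 B\| = 2\sqrt{2}$, absorbing this into the overall implicit constant. The main obstacle is combinatorial bookkeeping: verifying that every configuration of consecutive $M_1 B$ and non-$M_1 B$ blocks admits a valid partition into meta-blocks of types (a)--(c), and that the exponents sum correctly to give total power $\le n/2$. The hard analytic input is already encapsulated in Lemmas \ref{lem 3} and \ref{lem 4}, so the remaining work is essentially a careful case analysis together with an application of submultiplicativity.
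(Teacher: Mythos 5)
Your proposal is correct and follows essentially the same route as the paper's proof: factor $M$ via Lemma \ref{lem 2}, absorb $T^{\delta_i}$ and $B^{k_1}$ into constants, bound runs of blocks with $(\ell_j,k_{j+1})\neq(1,1)$ by Lemma \ref{lem 3}, handle runs $(M_1B)^r$ with $r\ge 2$ by \eqref{*.7} and an isolated $M_1B$ by pairing it with its successor via \eqref{2.1} (or, if it is a lone trailing factor, by the constant $\|M_1B\|=2\sqrt{2}$), peel off a bare $M_1^{\ell_L}$ via \eqref{20} when $k_{L+1}=0$, and conclude by submultiplicativity with the exponents summing to at most $n/2$. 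The only point you gloss over is that Lemma \ref{lem 2} is stated only for even $n$; for odd $n$ the paper simply writes $M=M'\M$ with $M'\in T_{n-1}$ and $\M\in T_1$ and absorbs $\|\M\|$ into the implicit constant, a one-line fix you should include.
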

\begin{proof}
If $n$ is even , for any $M\in T_n$, by Lemma \ref{lem 2},
\[
M= T^{\delta_1}B^{k_1}M_1^{\ell_1}B^{k_2}M_1^{\ell_2}\cdots B^{k_L}M_1^{\ell_L}B^{k_{L+1}}T^{\delta_2 } 
\]
for $\ell_1, \ell_2, \ldots ,\ell_L  \ge 1, \; k_2, k_3, \cdots ,k_L \in \{ 1,2 , 3\} , \; ,  k_1, k_{L+1} \in \{  0, 1, 2, 3\}  $ and $  \delta_1, \delta_2\in \{ 0, 1\} .$

\vspace{3mm}

We divide our consideration into two cases: (1) $k_{L+1}=1,2,3$ and (2) $k_{L+1}=0$.

\vspace{3mm}

\noindent Case (1): If $k_{L+1}=1,2,3$, then
\begin{eqnarray}
\| M \| & \le &  \left( \| T^{\delta_1}B^{k_1} \| \cdot \| T^{\delta_2 } \|\right) \| M_1^{\ell_1}B^{k_2}M_1^{\ell_2}\cdots B^{k_L}M_1^{\ell_L}B^{k_{L+1}} \|  \nonumber \\
& \ll & \| M_1^{\ell_1}B^{k_2}M_1^{\ell_2}\cdots B^{k_L}M_1^{\ell_L}B^{k_{L+1}} \|.  \label{*.1}
\end{eqnarray}
We further divide the consideration into $4$ cases depending on how $M$ contains the term $M_1B$ as a factor.
\begin{enumerate}
\item[(i)] If there is no $(\ell_j, k_{j+1})=(1,1)$, then by Lemma \ref{lem 3}, we have 
\begin{equation}
\label{*.2}
\| M_1^{\ell_1}B^{k_2}M_1^{\ell_2}\cdots B^{k_L}M_1^{\ell_L}B^{k_{L+1}} \| \le ((1.0000005)^2\lambda)^{n/2}.
\end{equation}
\item[(ii)] If there is $(\ell_j, k_{j+1})=(1,1)$ for $1\le k \le L-1$, then by grouping as a product of $(M_1^{\ell_j}B^{k_{j+1}}M_1^{\ell_{j+1}}B^{k_{j+2}})=(M_1BM_1^{\ell_{j+1}}B^{k_{j+2}})$ and possibly $(M_1^{\ell_L}B^{k_{L+1}})$ for $L$ is
 odd and using \eqref{21}, \eqref{***} and  \eqref{2.1}, we have 
\begin{equation}
\label{*.3}
\| M_1^{\ell_1}B^{k_2}M_1^{\ell_2}\cdots B^{k_L}M_1^{\ell_L}B^{k_{L+1}} \|  \le ((1.0000005)^2\lambda)^{n/2}.
\end{equation}
\item[(iii)] If $(\ell_L, k_{L+1})=(\ell_{L-1}, k_{L}) = \cdots = (\ell_{t+1}, k_{t+2})  =(1,1)$ but $(\ell_{t}, k_{t+1}) \neq (1,1)$,  for some $1 \le t \le L-2$ then 
\begin{eqnarray*}
&   &  \| M_1^{\ell_1}B^{k_2}M_1^{\ell_2}\cdots B^{k_L}M_1^{\ell_L}B^{k_{L+1}} \|  \\   & \le  &\| M_1^{\ell_1}B^{k_2}\cdots M_1^{\ell_{t}}B^{k_{t+1}}\| \cdot \| (M_1B)^{L-t} \|  \\
& \le &((1.0000005)^2\lambda)^{\sum_{j =1}^t (\ell_j + k_{j+1})/2} \| (M_1B)^{L-t} \|    \\
\end{eqnarray*}
by the same argument as in (i) or (ii).  In view of  \eqref{*.7}, since $L-t \ge 2$, we have $\| (M_1B)^{L-t} \| \le  ((1.0000005)^2\lambda)^{L-t}$ and hence
\begin{eqnarray}
&    & \| M_1^{\ell_1}B^{k_2}M_1^{\ell_2}\cdots B^{k_L}M_1^{\ell_L}B^{k_{L+1}} \|  \nonumber \\ 
 & \le &  ((1.0000005)^2\lambda)^{\sum_{j =1}^t (\ell_j + k_{j+1})/2 + (L-t)}  \nonumber  \\
& = & ((1.0000005)^2\lambda)^{\sum_{j =1}^L (\ell_j + k_{j+1})/2}  \le   ((1.0000005)^2\lambda)^{n/2}\label{*.4}
\end{eqnarray}
\item[(iv)] if $(\ell_L, k_{L+1})=(1,1)$ but $(\ell_{L-1}, k_{L}) \neq (1,1)$, then 
\begin{eqnarray}
&      & \| M_1^{\ell_1}B^{k_2}M_1^{\ell_2}\cdots B^{k_L}M_1^{\ell_L}B^{k_{L+1}} \|  \nonumber \\ 
& \le & \| M_1^{\ell_1}B^{k_2}M_1^{\ell_2}\cdots B^{k_L}\| \| M_1B \|   \nonumber \\ 
& \ll &   ((1.0000005)^2\lambda)^{n/2-1} \ll ((1.0000005)^2\lambda)^{n/2}\label{*.5}
\end{eqnarray}
\end{enumerate}
by (i), (ii) and (iii). Hence in view of \eqref{*.1} - \eqref{*.5}, we have
\[
\| M \| \ll ((1.0000005)^2\lambda)^{n/2}.
\]

\vspace{3mm}

\noindent Case (2): If $k_{L+1}=0$, then 
\begin{eqnarray}
\| M \| & \ll & \| M_1^{\ell_1}B^{k_2}M_1^{\ell_2}\cdots B^{k_L}M_1^{\ell_L} \|  \nonumber  \\
& \le & \| M_1^{\ell_1}B^{k_2}M_1^{\ell_2}\cdots B^{k_L}\| \cdot \| M_1^{\ell_L} \|  \nonumber \\
& \ll & \lambda^{\ell_L/2} \| M_1^{\ell_1}B^{k_2}M_1^{\ell_2}\cdots B^{k_L}\| \nonumber \\
& \ll &   ((1.0000005)^2\lambda)^{n/2}  \nonumber 
\end{eqnarray}
by \eqref{20} and the term $\| M_1^{\ell_1}B^{k_2}M_1^{\ell_2}\cdots B^{k_L}\|$ can be treated 
in the same way as Case (1). Hence we get \eqref{2.2} for $n$ is even. 

If $n$ is odd, then $M = M' \M$ with $M'\in T_{n-1}$ and $\M \in T_1$. So
\begin{eqnarray*}
\| M \| & \le &  \| M'\| \cdot \| \M\| \ll \| M' \| \\
&  \ll &   ((1.0000005)^2\lambda)^{(n-1)/2} 
\end{eqnarray*}
by applying \eqref{2.2} for $M'$ as $n-1$ is even.
This proves \eqref{2.2} for $n$ is odd. 

As a result, we have
\[
|a_{k_n}|, |b_{k_n'}| \le \sqrt{|a_{k_n}|^2+2|b_{k_n'}|^2} = \| \omega_n(k_n) \| \le \| M \| \| \omega_1(k_1)\| \ll  ((1.0000005)^2\lambda)^{n/2}.
\]
This completes the proof of Theorem \ref{thm 2}.
\end{proof}

In view of Theorem \ref{thm 2}, we prove the upper bound in Theorem \ref{thm 1}.

We remark that the upper bound and lower bound agree with 6 decimal digits of the constants in the exponents. In order to improve our upper bound to agree with 7 decimal digits, we need to verify 
Lemma 3 for $\ell \le 3.297256652\cdots 10^7$ for $k=1$ and $\ell \le 2.479868687\cdots 10^7$  with $(1.00000002)$ in the place of $(1.000005)$.

\section{Lower bound for the autocorrelation coefficients}

This is in fact a result in \cite{Ta2} but its proof contains some mistakes and typos. For example, $k_n$ should be $\frac13 (2L_n-1)$ instead of $\frac13 (L_n+1)$ as stated in \cite{Ta2} when $n$ is odd. 
Also the coefficients of (6) and (7) in \cite{Ta2} are incorrect (cf. \eqref{A} and \eqref{B} below). 

In view of \eqref{1} and \eqref{3},  for $|z|=1$ we have
\begin{eqnarray*}
          \left| P_n(z) \right|^2 
& = &  z^{2^{n-1}} \hb{P_{n-1}(z)}Q_{n-1}(z) + \hb{z}^{2^{n-1}}P_{n-1}(z) \hb{Q_{n-1}(z)} + 2^n.
\end{eqnarray*}
Using \eqref{3} again, we have
\begin{eqnarray*}
          \left| P_n(z) \right|^2 
& = &  2\left( z^{L_{n-1}} + \hb{z}^{L_{n-1}}\right)|P_{n-2}(z)|^2  -  \left( z^{3L_{n-2}} - \hb{z}^{L_{n-2}}\right)\hb{P_{n-2}(z)}Q_{n-2}(z) \\
&    & +   \left( z^{L_{n-2}} - \hb{z}^{3L_{n-2}}\right)P_{n-2}(z)\hb{Q_{n-2}(z)}- L_{n-1}\left( z^{L_{n-1}} + \hb{z}^{L_{n-1}}\right) + 2^n.
\end{eqnarray*}
Let 
\begin{equation}
k_n=\frac13 (2L_n +(-1)^n) \quad \mbox{ and } \quad k_n':=k_n-L_n.
\label{3.1}
\end{equation}
Thus, for $k_n \in S_n$, we have
\begin{eqnarray}
&     &     \h{\left| P_n(z) \right|^2}(k_n) \nonumber \\
& = &  2\h{\left( z^{L_{n-1}} + \hb{z}^{L_{n-1}}\right)|P_{n-2}(z)|^2}(k_n)  -  \h{\left( z^{3L_{n-2}} - \hb{z}^{L_{n-2}}\right)\hb{P_{n-2}(z)}Q_{n-2}(z) }(k_n) \nonumber \\
&    & +  \h{ \left( z^{L_{n-2}} - \hb{z}^{3L_{n-2}}\right)P_{n-2}(z)\hb{Q_{n-2}(z)}- L_{n-1}\left( z^{L_{n-1}} +\hb{z}^{L_{n-1}}\right) } (k_n) \nonumber \\
& = & 2\h{|P_{n-2}(z)|^2}(k_n-L_{n-1}) + 2\h{|P_{n-2}(z)|^2} (k_n+L_{n-1}) \nonumber \\
&    & -\h{\hb{P_{n-2}(z)}Q_{n-2}(z)}(k_n-3L_{n-2}) + \h{\hb{P_{n-2}(z)}Q_{n-2}(z)}(k_n+L_{n-2}) \nonumber \\ 
&   & + \h{P_{n-2}(z)\hb{Q_{n-2}(z)}}(k_n-L_{n-2})-\h{P_{n-2}(z)\hb{Q_{n-2}(z)}}(k_n+2L_{n-2})  \nonumber \\ 
&    & -L_{n-1}\h{1}(k_n-L_{n-1}) +L_{n-1}\h{1}(k_n+L_{n-1}).  \nonumber 
\end{eqnarray}
In view of \eqref{3.1}, we have $k_n+L_{n-1}, k_n+L_{n-2}, k_n-L_{n-2}, k_n+2L_{n-2} \not\in S_{n-2}$, $k_n \neq \pm L_{n-1}$  so that their corresponding Fourier coefficients are $0$ and
\[
k_n-3L_{n-2}=k_{n-2}-L_{n-2}= k_{n-2}'.
\]
It follows that 
\begin{equation}
     \h{\left| P_n(z) \right|^2}(k_n) = 2\h{|P_{n-2}|^2}(k_{n-2})  -\h{\hb{P_{n-2}}Q_{n-2}}(k_{n-2}').   
\end{equation}
Similarly, we have 
\begin{eqnarray*}
&    &  \hb{P_n(z)}Q_{n}(z)  \\
& = &  2\left( \hb{z}^{L_{n-1}}-z^{L_{n-1}}  \right)|P_{n-2}(z)|^2  +  \left( z^{3L_{n-2}} +2z^{L_{n-2}}+\hb{z}^{L_{n-2}}\right)\hb{P_{n-2}(z)}Q_{n-2}(z) \\
&    & -   \left( \hb{z}^{3L_{n-2}} -2\hb{z}^{L_{n-2}}+z^{L_{n-2}}\right)P_{n-2}(z)\hb{Q_{n-2}(z)}+L_{n-1}\left( z^{2L_{n-2}} - \hb{z}^{2L_{n-2}}\right) 
\end{eqnarray*}
and hence
\begin{eqnarray}
& & \h{\hb{P_n(z)}Q_{n}}(k_n')= 2\h{|P_{n-2}|^2}(k_{n-2}) \nonumber \\ & &  \qquad \qquad \qquad +\h{\hb{P_{n-2}}Q_{n-2}}(k_{n-2}') + 2 \h{P_{n-2}\hb{Q_{n-2}}}(k_{n-2}')\label{A}
\end{eqnarray}
and
\begin{eqnarray*}
 &  &    P_n(z)\hb{Q_{n}}(z)   =   2\left( {z}^{L_{n-1}}-\hb{z}^{L_{n-1}}  \right)|P_{n-2}(z)|^2  -   \left( {z}^{3L_{n-2}} -2{z}^{L_{n-2}}+\hb{z}^{L_{n-2}}\right)  \\
&    & \qquad + \left( \hb{z}^{3L_{n-2}} +2\hb{z}^{L_{n-2}}+{z}^{L_{n-2}}\right)P_{n-2}(z)\hb{Q_{n-2}(z)}-L_{n-1}\left( z^{2L_{n-2}} - \hb{z}^{2L_{n-2}}\right) 
\end{eqnarray*}
and hence
\begin{eqnarray}
\label{B}
& & \h{P_n(z)\hb{Q_{n}}}(k_n')=-2\h{|P_{n-2}|^2}(k_{n-2}) \nonumber \\ & & \qquad \qquad \qquad-\h{\hb{P_{n-2}}Q_{n-2}}(k_{n-2}') + 2 \h{P_{n-2}\hb{Q_{n-2}}}(k_{n-2}').
\end{eqnarray}
Therefore, we have
\[
\w_n =M_1 \w_{n-2}
\]
for $n \ge 2$ where $M_2=\begin{pmatrix}
2 & -1 & 0 \\
2 & 1 & 2 \\
-2 & -1 & 2
\end{pmatrix}$. Also we have $\Lambda_2 = {S'}^{-1}M_2S'$ where $\Lambda$ is $3\times 3$ diagonal matrix with $\lambda, (\lambda'), (\overline{\lambda'})$ entries.
where
\[
S':=
\begin{pmatrix}
s_5 & s_6 & \hb{s_6} \\
s_7 & s_8 & \hb{s_8} \\
1 & 1 & 1 
\end{pmatrix}
\]
and
\begin{eqnarray*}
s_5&:=&{\frac { -11\left( 1+ \,\sqrt {177} \right)\left( 71+6\,\sqrt {177
}\right)^{1/3}-\left( 103-7\,\sqrt {177} \right)  \left( 71+
6\,\sqrt {177} \right) ^{2/3}+242}{1452}}, \\
s_6&:=&{\frac { 11 \left( 1-\,i\sqrt {3} \right)  \left( \sqrt {177}+1
 \right) \left( 71+6\,\sqrt {177
}\right)^{1/3}+\left( 1+i\sqrt {3} \right)  \left( 103-7\,\sqrt {177} \right) 
 \left( 71+6\,\sqrt {177} \right) ^{2/3}+484}{2904}}, \\
 s_7&:=&
{\frac { 11\left( -21+\,\sqrt {177} \right) 
\left( 71+6\,\sqrt {177
}\right)^{1/3}+ \left(-39 + 5\,\sqrt {177}
 \right) \left( 71+6\,\sqrt {177} \right) ^{2/3} }{726}}, \\
s_8&:=&
{\frac { 11 \left( 1-\,i\sqrt {3} \right)  \left( 21 -\sqrt {177}
 \right) \left( 71+6\,\sqrt {177
}\right)^{1/3} +\left( 1+  i\sqrt {
3} \right)  \left( 39-5\,\sqrt {177} \right)  \left( 71+6\,\sqrt {
177} \right) ^{2/3}}{1452}}.
\end{eqnarray*}

If $n$ is even, then inductively,  we have
\[
\w_n = M_2^{n/2}\w_0
\]
where $\w_0=\begin{pmatrix} 0 \\ 1 \\ 1 \end{pmatrix}$ because $|P_0|^2(z)=(\hb{P_0}Q_0)(z)=(P_0\hb{Q_0})(z)=1$ and $k_0=1, k_0'=0$. In view of \eqref{19}, we have 
\[
\w_n=S' \Lambda^{n/2} (S')^{-1}\w_0 
\] 
with $\Lambda
=\mbox{diag}[\lambda , \lambda' ,\hb{\lambda'}]$. 
Therefore, there are constants $a,b,c$ such that 
\begin{eqnarray*}
\left| \h{|P_n|^2}\left( \frac13 (2L_n+1)\right)\right| & = &  \left| a \lambda^{n/2} + b (\lambda')^{n/2} + \hb{b} (\hb{\lambda'})^{n/2} \right|
\\
& \ge & |a|\lambda^{n/2}  - 2|b||\lambda'|^{n/2} \gg \lambda^{n/2} 
\end{eqnarray*}
 where $a=-0.38215952\cdots $ and $b=0.19107976\cdots + i0.88541019\cdots$ and $|\lambda | > | \lambda'|$. This proves
\[
 \max_{1 \leq k \leq L_n-1}{|a_k|} \ge \left| \h{|P_n|^2}\left( \frac13 (2L_n+1)\right) \right| \gg \lambda^{n/2}  .
\]

If $n$ is odd, then inductively,  we have
\[
\w_n = M_2^{(n-1)/2}\w_1
\]
where $\w_1=\begin{pmatrix} 1 \\ 1 \\ -1 \end{pmatrix}$ because 
\[
|P_1|^2(z)=\hb{z}+1+z, \quad (\hb{P_1}Q_1)(z)=\hb{z}-z, \quad (P_1\hb{Q_1})(z)=-\hb{z}+z
\]
 and $k_1=1, k_1'=-1$.  In view of \eqref{19}, we have 
\[
\w_n=S' \Lambda^{(n-1)/2} (S')^{-1}\w_1 
\] 
with $\Lambda
=\mbox{diag}[\lambda , \lambda' ,\hb{\lambda'}]$. 
Therefore, there are constants $a',b',c'$ such that 
\begin{eqnarray*}
\left| \h{|P_n|^2}\left( \frac13 (2L_n-1)\right)\right| & = &  \left| a' \lambda^{(n-1)/2} + b' (\lambda')^{(n-1)/2} + \hb{b'} (\hb{\lambda'})^{(n-1)/2} \right|
\\
& = &  \left| (a'\lambda^{-1/2}) \lambda^{n/2} +  (b'\lambda'^{-1/2}) (\lambda')^{n/2}+  (\hb{b'} \lambda'^{-1/2}) (\hb{\lambda'})^{n/2} \right|
\\
& \ge  & (|a'|\lambda^{-1/2}) \lambda^{n/2} - (2 |b'||\lambda'|^{-1/2})|\lambda'|^{n/2}\\
& \gg & \lambda^{n/2}
\end{eqnarray*}
where $a'=0.28744961\cdots $ and $b'=0.3562751947-i0.3300357859$. 
This proves for all odd $n$, we have
\[
 \max_{1 \leq k \leq L_n-1}{|a_k|} \ge \left| \h{|P_n|^2}\left( \frac13 (2L_n-1)\right) \right| \gg \lambda^{n/2}.
\]
The lower bounds for $ \max_{1 \leq k \leq L_n-1}{|b_k|}$ can be proved in the same way.
This proves  the lower bounds for Theorem \ref{thm 1}.

\section{Application of Theorem \ref{thm 1}}

Let
$$
M_q(f) = \left( \frac{1}{2\pi} \int_{-\pi}^{\pi}|f(\theta)|^q \, d\theta \right)^{1/q}, \qquad q > 0.
$$
Our next lemma is due to Littlewood, see Theorem 1 in \cite{Li1}.

\begin{lem}
\label{lem 4.1}
Let $n \ge 1$. If the trigonometric polynomial $f$ of the form
$$f(t) = \sum_{m=0}^{L_n-1}{b_m \cos(mt + \alpha_m)}\,, \qquad b_m, \alpha_m \in {\mathbb R}\,,$$ 
satisfies
$$M_1(f) \geq c\mu\,, \qquad \mu := M_2(f)\,,$$
where $c > 0$ is a constant, $b_0 = 0$,
$$s_{\lfloor (L_n-1)/h \rfloor} = \sum_{m=1}^{\lfloor (L_n-1)/h \rfloor}{\frac{b_m^2}{\mu^2}} \leq 2^{-9}c^6$$
for some constant $h>0$, and $v \in {\mathbb R}$ satisfies
$$|v| \leq V = 2^{-5}c^3\,,$$
then
$${\mathcal N}(f,v) > Dh^{-1}c^52^n\,,$$
where ${\mathcal N}(f,v)$ denotes the number of real zeros of $f-v\mu$ in $(-\pi,\pi)$, and $D > 0$ is
an absolute constant.
\end{lem}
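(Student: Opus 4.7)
The plan is to follow Littlewood's classical scheme for counting zeros of trigonometric polynomials whose $L^1$ norm is comparable to its $L^2$ norm. The argument combines three ingredients: a low/high frequency split, a second-moment lower bound on level sets of $f$, and a Cauchy--Schwarz count over intervals of constant sign, the last of which requires a frequency-localization estimate on the lengths of sign-constancy intervals.

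\smallskip

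Set $N := \lfloor (L_n - 1)/h \rfloor$ and decompose $f = g + r$ where
\[
g(t) := \sum_{m=1}^{N} b_m \cos(mt + \alpha_m), \qquad r(t) := \sum_{m=N+1}^{L_n - 1} b_m \cos(mt + \alpha_m).
\]
By Parseval and the hypothesis $s_N \leq 2^{-9} c^6$, one has $M_2(g)^2 = \tfrac{\mu^2}{2} s_N \leq 2^{-10} c^6 \mu^2$, so $g$ is negligible in $L^2$; by Chebyshev the set $E_0 := \{|g| > 2^{-4}c^3\mu\}$ has small measure, and on $(-\pi,\pi) \setminus E_0$ we have $f - v\mu = r + O(c^3\mu)$.

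\smallskip

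Since $b_0 = 0$ forces $\int f = 0$, it follows that $\int f^+ = \int f^- = \pi M_1(f) \geq \pi c \mu$. Separating the contribution of $\{0 < f \leq c\mu/4\}$ from $\{f > c\mu/4\}$ and using $|v\mu| \leq V\mu = 2^{-5}c^3\mu \ll c\mu/4$, a routine truncation gives the lower bound
\[
\int(f - v\mu)^+ \;\gg\; c\mu
\]
(and similarly for the negative part).

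\smallskip

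To count zeros, let $I_1, \ldots, I_p$ be the maximal open intervals on which $f - v\mu > 0$; then $p \leq \mathcal{N}(f,v)/2 + 1$. Cauchy--Schwarz applied within each $I_j$ and then on the sum $\sum_j$ gives
\[
\int(f - v\mu)^+ \;\leq\; \Bigl(p \cdot \max_j |I_j|\Bigr)^{1/2} \Bigl(\int(f - v\mu)^2\Bigr)^{1/2} \;\ll\; \mu \bigl(p \max_j |I_j|\bigr)^{1/2}.
\]
The decisive input is the bound $\max_j |I_j| \ll h/L_n$. This follows from frequency localization: testing $r$ against a smooth bump $\chi$ supported on $I_j$ and writing $\int r\chi = \sum_{|m|>N} \widehat{r}(m)\overline{\widehat{\chi}(m)}$, one uses that $\widehat{\chi}$ is essentially supported at frequencies $\lesssim 1/|I_j|$, so if $|I_j| \gg 1/N$ the two spectra are disjoint and $|\int r\chi|$ is much smaller than $\int_{I_j} r$, contradicting that $r \gtrsim -c^3\mu$ is essentially of one sign on $I_j \setminus E_0$. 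Combining the two displayed estimates yields $p \gg c^2 h^{-1} L_n$, and the sharper exponent $c^5$ in the conclusion comes from carefully tracking the contributions of $E_0$ and of cross-terms between $g$ and $r$ using the quantitative calibrations in the hypothesis.

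\smallskip

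The main obstacle is the frequency-localization estimate on $\max_j |I_j|$, together with the bookkeeping of constants needed to extract precisely the $c^5$ exponent while absorbing the exceptional set $E_0$ and the $g$--$r$ cross-terms into the main estimate. Every piece of the hypothesis--the normalization $V = 2^{-5}c^3$ on the shift, the $2^{-9}c^6$ threshold on $s_N$, and the factor $h$ in the cutoff--is calibrated precisely so that this bookkeeping goes through.
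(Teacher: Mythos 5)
This lemma is not proved in the paper at all: it is quoted verbatim as Littlewood's result (Theorem 1 of \cite{Li1}), so there is no in-paper argument to compare yours against; your proposal has to stand on its own as a reproof of Littlewood's theorem, and as it stands it has a genuine gap at its decisive step. The entire zero count hinges on the claim $\max_j |I_j| \ll h/L_n$ for \emph{every} maximal interval of constant sign of $f-v\mu$, and neither the claim nor your argument for it holds up. Your ``frequency localization'' reasoning only shows that $\int r\chi$ is \emph{small} when $\chi$ is a smooth bump on a long interval $I_j$ (compactly supported $\chi$ is not band-limited, so the two spectra are never disjoint, only nearly so); but positivity of $f-v\mu$ on $I_j$ gives no quantitative \emph{lower} bound on $\int_{I_j} r\chi$ --- the function may hover just above zero there --- so there is nothing to contradict. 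Worse, the claim itself is false under the stated hypotheses: the hypotheses on $f$ are purely $L^2$-type ($s_N$ small, $M_1\ge c\mu$), and they permit, e.g., $f=g+r$ with $g$ a tiny low-frequency term that is positive on an interval of length comparable to $\pi$ and $r$ a high-frequency wave packet concentrated elsewhere, producing a sign-constancy interval of length $\gg 1/N$. A Fej\'er-kernel computation shows that even a pure high-pass function can be positive on an interval of length $\delta\gg 1/N$ provided it is correspondingly large elsewhere, so no pointwise bound on interval lengths can follow from spectral information alone. Since your Cauchy--Schwarz step uses $\sum_j|I_j|\le p\max_j|I_j|$, a single long interval destroys the bound, and you cannot simply discard such intervals because they may carry most of the mass of $\int(f-v\mu)^+$.

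The earlier parts of the sketch (the split $f=g+r$, Parseval giving $M_2(g)^2\le 2^{-10}c^6\mu^2$, and $\int(f-v\mu)^+\gg c\mu$ from $b_0=0$ and $|v|\le 2^{-5}c^3$) are fine, but they are the routine part. Beyond the localization issue, the exceptional set $E_0$ is never actually absorbed (inside an interval $I_j$ the set $E_0$ is exactly where $r$ could be very negative, which is what your ``essentially one sign'' step needs to exclude), and the passage from the exponent $c^2$ your scheme would give to the claimed $c^5$ is left as a promissory note. If you want a workable route, the natural replacement is to pair $f-v\mu$ against $\sigma=\operatorname{sgn}(f-v\mu)$, whose Fourier coefficients are controlled by the number of sign changes, and exploit the hypothesis that the low-frequency mass of $f$ is small --- this is much closer in spirit to what Littlewood actually does, and it is global, avoiding any pointwise claim about individual intervals; but making it yield the stated $Dh^{-1}c^52^n$ rather than a weaker count requires the finer analysis of \cite{Li1}, which you should consult rather than reconstruct.
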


\vspace{3mm}

\noindent {\bf Proof of Theorem \ref{thm 4}.}

\vspace{3mm}

Let the trigonometric polynomial $f$ be defined by $f(t) := R(t)-L_n$. We show that
$f$ satisfies the assumptions of the Lemma with $c=1/4$ and $h := 2^{(2\lambda - 1)n}$ if $n$ is sufficiently large.
Note that Littlewood in \cite{Li1} evaluated $M_4(P_n)$ and found that $M_4(P_n) \sim (4^{n+1}/3)^{1/4}$. 
Hence $\mu := M_2(f) \sim (1/3)^{1/2}2^n$.
Also, 
it follows from \eqref{2} that $M_{\infty}(f) \leq 2^n$, hence
$$(1/3)2^{2n} \sim (M_2(f))^2 \leq M_1(f)M_{\infty}(f) \leq 2^nM_1(f)$$
implies that $M_1(f) \geq \mu /2$ if $n$ is sufficiently large.
Obviously $f$ is of the form
$$f(t) = \sum_{m=1}^{L_n-1}{b_m \cos(mt)}\,, \qquad b_m \in {\mathbb R}\,,$$
where the assumptions imply that
\begin{eqnarray*} s_{\lfloor (L_n-1)/h \rfloor} & =  & \sum_{m=1}^{\lfloor (L_n-1)/h \rfloor}{\frac{b_m^2}{\mu^2}}  
\leq\left( \frac{2^n-1}{h} \right)\, \left( \frac{4(C2^{(\lambda - \varepsilon )n})^2}{\mu^2} \right) \\
& \leq & \left( \frac{2^n}{2^{(2\lambda - 1)n}}\right) \, \left(\frac{4C^22^{(2\lambda - 2\varepsilon )n}}{2^n/4} \right) \leq 16C^2 2^{-2\varepsilon n} \\
& \leq & 2^{-9}c^6  \end{eqnarray*}
if $n$ is sufficiently large. So $f$ satisfies the assumptions of Lemma 3.6 with $c=1/2$ and
$h:=2^{(2\lambda - 1)n}$ if $n$ is sufficiently large, indeed. Thus Lemma 5 implies that
$${\mathcal N}(f,v) > Dh^{-1}c^52^n = A2^{(2-2\lambda )n} \,$$
whenever
$$
|\eta| 2^n  =|v| \mu \le V2^n = 2^{-5}c^32^n =2^{-5}2^{-3}2^n= 2^{-8}2^n
$$
and hence we can choose $B=2^{-8}$
in the theorem if $n$ is sufficiently large.

\section{Concluding remark}

A very first version by a subset of the authors of the present paper gave weaker bounds for
the (finite) correlation coefficients using a less precise method. We hope to revisit these questions
in the near future, with a detailed discussion about the different correlation coefficients that can
be defined in this context.

\section*{Acknowledgement}{The authors wish to acknowledge the anonymous reviewer for his/her reviewing the paper, including the computational
details, thoroughly.}


\begin{thebibliography}{99}

\bibitem{BC} {\sc J. Brillhart, L. Carlitz}, {\it Note on the Shapiro polynomials}, Proc. Amer. Math. Soc.
{\bf 25} (1970) 114--118.

\bibitem{BM} {\sc J. Brillhart, P. Morton}, {\it A case study in mathematical research: the 
Golay-Rudin-Shapiro sequence}, Amer. Math. Monthly {\bf 103} (1996) 854--869. 

\bibitem{E1} {\sc T. Erd\'elyi},  {\it  On the oscillation of the modulus of the Rudin-Shapiro
polynomials on the unit circle}, accepted in Mathematika, 2019.

\bibitem{E2} {\sc T. Erd\'elyi},  {\it Improved results on the oscillation of the modulus of the
Rudin-Shapiro polynomials on the unit circle}, accepted in Proc. Amer. Math. Soc., 2019.

\bibitem{Go} {\sc M. J. E.~Golay}, {\it Statistic multislit spectrometry and its application to the
panoramic display of infrared spectra}, J. Optical Soc. America {\bf 41} (1951) 468--472.

\bibitem{Li1} {\sc J. E.~Littlewood}, {\it The real zeros and value distributions of real trigonometrical
polynomials}, J. London Math. Soc. {\bf 41}  (1966) 336--342.

\bibitem{Li2}   {\sc J. E.~Littlewood}, Some Problems in Real and Complex Analysis, (1968), D. C. 
Heath and Co. Raytheon Education Co., Lexington, Mass.

\bibitem{Ru} {\sc W. Rudin}, {\it Some theorems on Fourier coefficients}, Proc. Amer. Math. Soc. 
{\bf 10} (1959) 855--859. 
              
\bibitem{Sha} {\sc H. S. Shapiro}, Extremal Problems for Polynomials and Power Series, 
Thesis (M. S.), Massachusetts Institute of Technology, Department of Mathematics, 1951, 
available at the URL\newline
\url{http://dspace.mit.edu/handle/1721.1/12198} 
              
\bibitem{Ta1}  {\sc M.~Taghavi}, {\it Upper bounds for the autocorrelation coefficients of the 
Rudin-Shapiro polynomials}, Korean J. Comput. Appl. Math. {\bf 4} (1997), 39--46. 

\bibitem{Ta2} {\sc M.~Taghavi}, {\it Autocorrelation for a class of polynomial with coefficients defined 
on $T$}, Iran. J. Sci. Technol. Trans. A Sci. {\bf 31} (2007), 391--396.

\end{thebibliography}
\end{document}